\newlist{todolist}{itemize}{2}
\setlist[todolist]{label=$\square$}
\theoremstyle{plain}
\newtheorem{thm}{Theorem}[section]
\newtheorem{lem}[thm]{Lemma}
\newtheorem{prop}[thm]{Proposition}
\theoremstyle{definition}
\theoremstyle{remark}
\newtheorem{rem}[thm]{Remark}
\newtheorem{ex}[thm]{Example}
\numberwithin{equation}{section}
\theoremstyle{conjecture}
\newcommand{\Z}{\mathbb Z}
\newcommand{\GG}{\mathbb G}
\newcommand{\PP}{{\mathbb P}}
\newcommand{\OO}{\mathcal{O}}
\newcommand{\II}{\mathcal{I}}
\newcommand{\cA}{\mathcal{A}}
\renewcommand{\ge}{\geqslant}
\renewcommand{\le}{\leqslant}
\DeclareMathOperator{\HH}{H} 
\DeclareMathOperator{\hh}{h}
\DeclareMathOperator{\Hom}{Hom}
\DeclareMathOperator{\Pfaff}{Pfaff}
\DeclareMathOperator{\rk}{rk}
\DeclareMathOperator{\Ker}{Ker}
\DeclareMathOperator{\T}{T}
\DeclareMathOperator{\SL}{\mathrm{SL}}
\begin{document}

\sloppy

\title[]{Quadric surfaces in the Pfaffian hypersurface in $\PP^{14}$}    

\author{Ada Boralevi}
\address{Dipartimento di Scienze Matematiche \lq\lq G. L. Lagrange\rq\rq, Politecnico di Torino, Corso Duca degli Abruzzi 24, 10129 Torino, Italy}
\email{\href{mailto:ada.boralevi@polito.it}{ada.boralevi@polito.it}}

\author{Maria Lucia Fania}
\address{Dipartimento di Ingegneria e Scienze dell'Informazione e Matematica, Universit\`a degli Studi dell'Aquila, via Vetoio, Loc. Coppito, 67100 L'Aquila, Italy}
\email{\href{mailto:marialucia.fania@univaq.it}{marialucia.fania@univaq.it}}

\author{Emilia Mezzetti}
\address{Dipartimento di Matematica e Geoscienze, Sezione di Matematica e Informatica,  Universit\`a degli Studi di Trieste, Via Valerio 12/1, 34127 Trieste, Italy}
\curraddr{}
\email{\href{mailto:mezzette@units.it}{mezzette@units.it}}

\thanks{The second and third named author are supported by PRIN 2017SSNZAW. The third named author is also supported by FRA of the University of Trieste. The first named author has been partially supported by MIUR grant Dipartimenti di Eccellenza 2018-2022 (E11G18000350001). All authors are members of INdAM--GNSAGA}

\subjclass[2020]{15A30; 14N05; 14F05; 14M15}

\keywords{Skew-symmetric matrices; Constant rank; Pfaffian hypersurface; Quadric surface; Globally generated vector bundles}

\begin{abstract} 
We study smooth quadric surfaces in the Pfaffian hypersurface in $\PP^{14}$ parameterising $6 \times 6$ skew-symmetric matrices of rank at most 4, not intersecting the Grassmannian $\GG(1,5)$. Such surfaces correspond to  quadratic systems of skew-symmetric matrices of size 6 and constant rank 4, and give rise to a globally generated vector bundle $E$ on the quadric. We analyse these bundles and their geometry, relating them to linear congruences of lines in $\PP^5$.
\end{abstract}
\maketitle

\section{Introduction}



Denote by $V_{n+1}$ a complex vector space of dimension $n+1$. Recall that, after fixing a basis, skew-symmetric tensors in $\wedge^2 V_{n+1}$ can be interpreted as skew-symmetric matrices of size $n+1$, and that the Grassmannian of lines in $\PP^{n}=\PP(V_{n+1})$ corresponds to matrices of rank $2$. A linear congruence in $\PP^n$ is a $(n-1)$-dimensional linear section of $\GG(1,n)$, given itself by the intersection $\GG(1,n) \cap \Delta$, where $\Delta$ is a linear space of codimension $n-1$. The space $\Delta$ is therefore given by the intersection of $n-1$ hyperplanes, that, in turn, correspond to points in the dual space $\check{\PP}(\wedge^2V_{n+1})$, generating a $(n-2)$-space $\check{\Delta}$. 

\smallskip

The study and classification of linear congruences in $\PP^n$ is a classical topic, that has recently found interesting applications in different areas, such as, for instance, systems of conservation laws of Temple  type \cite{agafonov_ferapontov}, degree of irrationality \cite{bastianelli_cortini_depoi,  bastianelli_depoi_ein_lazarsfeld_ullery},  foliations \cite{fassarella}. Thus far, a complete classification is known only for values of $n\leq 4$ \cite{castelnuovo, DePoi_congruences, DePoi_Mezzetti}.

\smallskip

In this article we give a contribution to the study of linear congruences in $\PP^5$, that amounts to describing all special positions 
of the 3-space $\check{\Delta}$ with respect to the dual Grassmannian $\check{\GG}(1,5)$ and to its singular locus.
Let us consider the Grassmannian of lines in $\PP^5=\PP(V_6)$: 
\[\GG(1,5) \hookrightarrow \PP(\wedge^2 V_6) = \PP^{14}.\]
There is a natural filtration, based on the rank of tensors, namely
\[\GG(1,5) \subset \sigma_2(\GG(1,5)) \subset \PP(\wedge^2 V_6) = \PP^{14},\]
corresponding to $6 \times 6$ skew-symmetric matrices of 
\[\{\rk \le 2\} \subset \{\rk \le 4\} \subset \{\rk \le  6\} = \PP(\wedge^2 V_6),\] 
where we denote by $\sigma_2(\GG(1,5))$ the variety of secant lines to the Grassmannian.

\smallskip

Inside the dual space $\check{\PP}^{14}$ there lives the dual variety $\check{\GG}(1,5)$ parameterising hyperplanes tangent to $\GG(1,5)$: it is the cubic hypersurface of $6 \times 6$ skew-symmetric matrices defined by the equation $\Pfaff=0$, so it corresponds to matrices of $\rk \le 4$: $\check{\GG}(1,5) \simeq  \sigma_2(\GG(1,5))$. Its singular locus is naturally isomorphic to $\GG(3, 5)\simeq\GG(1,5)$, and it is formed by hyperplanes tangent to $\GG(1,5)$ at the points corresponding to the lines of a $\PP^3$, and the associated skew-symmetric matrix has rank 2. 

\smallskip

If the 3-space $\Delta$ is general, the intersection $\check{\GG}(1,5) \cap \check{\Delta}$ is a cubic surface $S$. Otherwise $\check{\Delta}\subset\check{\GG}(1,5)$, but in this second case $\check{\Delta}$ meets the rank $2$ locus (see \cite{Manivel_Mezzetti}).

\smallskip

An interesting case arises when the intersection $\check{\GG}(1,5) \cap \check{\Delta}$ is a reducible cubic surface $S$ not intersecting $\GG(1, 5)$, having a plane and a smooth quadric surface as irreducible components. Then such a plane (respectively quadric surface) can be interpreted naturally as a linear (respectively quadratic) system of skew-symmetric matrices of constant rank 4, of projective dimension 2.

\smallskip

Planes of this type have been completely classified in \cite{Manivel_Mezzetti}: up to the action of $PGL_6$ there are exactly four different orbits, all of dimension $26$; they correspond to the double Veronese embeddings of $\PP^2$ in $\GG(1,5)$, or, equivalently, to rank $2$ globally generated vector bundles on $\PP^2$ with first Chern class $c_1=2$ (see \cite{sierra_ugaglia_gg}).
Indeed, given a plane of $6 \times 6$ skew-symmetric matrices of constant rank 4, there is an exact sequence of the form
\[\label{succ_piano}
0 \to E^*(-1) \to \OO_{\PP^2}(-1)^6 \to \OO_{\PP^2}^6 \to E\to 0,
\]
where $E$ is a rank two vector bundle on $\PP^2$ with $c_1(E)=2$. 

\medskip

The aim of this note is to study the embeddings  of smooth quadric surfaces
\[{Q \subset \check{\GG}(1,5) \setminus \GG(1,5).}\] 
The hope of achieving for quadric surfaces the same kind of classification obtained in the case of planes fades immediately, after a quick parameter count shows the existence of an infinite number of orbits.

However, since rank 2 globally generated bundles on a smooth quadric surface are classified (see \cite{Ballico_Huh_Malaspina}), we have considered the following problems: first, understanding which of these vector bundles are associated to a quadratic system of skew-symmetric matrices of constant rank 4; second, studying the geometry of the found examples, relating them to linear congruences of lines in $\PP^5$.

\smallskip

Our main result is the existence Theorem \ref{main theorem}, that gives the complete list of rank $2$ globally generated vector bundles on $Q$ associated to a quadratic system of skew-symmetric matrices of size $6$ and constant rank $4$ (see Section 3 for a more precise statement).

\smallskip

Our techniques rely on the already mentioned classification of planes contained in 
$\check{\GG}(1,5) \setminus \GG(1,5)$, and on a study of the geometry of the bundles involved. More precisely, we construct examples of such quadric surfaces either by considering directly the case of decomposable bundles (Section 4), or by constructing bigger size matrices and then projecting them to desired size ones with a projection technique (Section 5), or else by extending some known examples on $\PP^2$ to a suitable 3-dimensional space (Section 6).

We make frequent use of Macaulay2 software \cite{M2} to study details about the geometry of our examples.

\smallskip

To the best of our knowledge, ours is the first instance of the study of nonlinear spaces contained in these orbits. While some of the ideas and proofs working for the linear case still hold on the quadric surface, 
there are a few differences to note, such as the fact that the rank 2 vector bundles that we construct are globally generated (so they define morphisms to the Grassmannian $\GG(1,5)$), but they do not always define embeddings the way they did in the linear case.

\smallskip

Finally, it is worth mentioning some interesting related work: in the paper \cite{Ferapontov_Manivel}, Ferapontov-Manivel have considered a problem kindred to ours, that admits an interpretation in terms of integrable systems: there, they are interested in 3-dimensional linear spaces $\PP^3 \subset \check{\GG}(1,5)$ satisfying some extra condition. In \cite{Comaschi}, Comaschi studied and classified (stable) $\SL(V_6)$-orbits of linear systems in 
$\PP(\wedge^2 V_6)$, whose generic element is a tensor of rank 4, thus generalising the work of \cite{Manivel_Mezzetti} in a different direction with respect to ours.

\section{Quadrics of skew-symmetric matrices of constant rank and vector bundles}

Let $Q$ be a smooth quadric surface, isomorphic to $\PP^1 \times \PP^1$ and embedded into $\PP^3$ through the Segre map, and let us call $\pi_i$ the projections over $\PP^1$. Any line bundle over $Q$ is of the form $\OO_{Q}(a,b)=\pi_1^*(\OO_{\PP^1}(a)) \otimes \pi_2^*(\OO_{\PP^1}(b))$. For the sake of brevity, we denote $\OO_Q(a,a)$ by $\OO_Q(a)$. Given a vector bundle $E$ over $Q$, we write $E(a,b)$ (respectively $E(a)$) for the tensor product $E \otimes \OO_Q(a,b)$ (resp. $E \otimes \OO_Q(a)$). We also write $c_1(E)=(a,b)$ to mean $c_1(E)=\OO_Q(a,b)$.

\medskip

The existence of a smooth quadric surface $Q \subset \check{\GG}(1,5) \setminus \GG(1,5)$ of skew-symmetric matrices of size $6$ and constant rank $4$ entails a long exact sequence of vector bundles on $Q$, of the form:
\begin{equation}\label{succ_quadrica}
0 \to E^*(-1) \to \OO_Q(-1)^6 \xrightarrow{A} \OO_Q^6 \to E\to 0,
\end{equation}
where $E$ is a rank 2 globally generated vector bundle on the surface, satisfying some non-degeneracy conditions in cohomology. Moreover, the skew-symmetry of the map $A$ above implies a symmetry of sequence \eqref{succ_quadrica}: in particular there is an isomorphism $E\simeq E^*(1)$. We refer to \cite{bo_fa_me} for details. 

\medskip

Similarly to what happens in the linear case, the Chern classes of a bundle $E$ fitting in a long exact sequence of type \eqref{succ_quadrica} must meet some requirements.

\begin{prop}\label{prop_chern} 
Let $E$ be a vector bundle, fitting in an exact sequence of the form \eqref{succ_quadrica} as cokernel of a skew-symmetric matrix of size $6$ and constant rank $4$ over a quadric surface $Q$. Then the Chern classes of $E$ must satisfy $c_1(E)=(2,2)$ and $0 \le c_2(E) \le 6$.
\end{prop}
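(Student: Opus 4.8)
The plan is to split \eqref{succ_quadrica} into the two short exact sequences
\[
0 \to E^*(-1) \to \OO_Q(-1)^6 \to G \to 0, \qquad 0 \to G \to \OO_Q^6 \to E \to 0,
\]
with $G := \im A$, which is a rank-$4$ subbundle of $\OO_Q^6$ because $A$ has constant rank $4$ (so $\ker A$, $\im A$, $\coker A$ are all locally free); in particular $E$, being a quotient of $\OO_Q^6$, is globally generated. I would first get $c_1(E)=(2,2)$ from the self-duality of \eqref{succ_quadrica} recalled above: dualizing the first sequence and twisting by $\OO_Q(-1)$ gives $0\to G^*(-1)\to\OO_Q^6\to E\to 0$, which by $\transpose A = -A$ coincides with the second, so $G^*\cong G(1)$. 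Comparing first Chern classes, $-c_1(G)=c_1(G)+(4,4)$, hence $2c_1(G)=(-4,-4)$; since $\Pic(Q)\cong\Z^2$ is torsion-free this forces $c_1(G)=(-2,-2)$, whence $c_1(E)=-c_1(G)=(2,2)$. The remaining numerical relations (for instance $c_2(G)=8-c_2(E)$) are then routine bookkeeping with the Whitney formula in $H^{\bullet}(Q)=\Z[h_1,h_2]/(h_1^2,h_2^2)$.

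The inequality $c_2(E)\ge 0$ is immediate: $E$ is globally generated of rank $2$ on the surface $Q$, so a general section vanishes on a finite (possibly empty) subscheme of length $c_2(E)$. For the upper bound one cannot argue from the Chern data of $E$ alone — a general globally generated rank-$2$ bundle on $Q$ with $c_1=(2,2)$ can have $c_2$ as large as $8$ — so one must exploit the shape of \eqref{succ_quadrica}, and here is the argument I would run. Let $\pi_1\colon Q\to\PP^1$ be one projection and $\ell\cong\PP^1$ a fibre, so $\OO_Q(a,b)|_\ell=\OO_{\PP^1}(b)$. Since $G=\im A$ is a quotient of $\OO_Q(-1)^6$, the bundle $G(1)$ is a quotient of $\OO_Q^6$, hence globally generated; restricting to $\ell$, $G(1)|_\ell$ is globally generated on $\PP^1$, so each summand of $G|_\ell=G(1)|_\ell\otimes\OO_{\PP^1}(-1)$ has degree $\ge -1$, and therefore $H^1(\ell,G|_\ell)=0$ for \emph{every} $\ell$. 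By cohomology and base change $R^1\pi_{1*}G=0$, and since also $R^1\pi_{1*}\OO_Q^6=0$, pushing forward $0\to G\to\OO_Q^6\to E\to 0$ produces a surjection $\OO_{\PP^1}^6=\pi_{1*}\OO_Q^6\twoheadrightarrow\pi_{1*}E$; thus $\pi_{1*}E$ is globally generated, hence $\deg(\pi_{1*}E)\ge 0$. On the other hand $E|_\ell$ is globally generated of rank $2$ with $c_1(E|_\ell)=\OO_{\PP^1}(2)$, so $R^1\pi_{1*}E=0$, and Grothendieck--Riemann--Roch for $\pi_1$ (relative tangent $T_{\pi_1}=\OO_Q(0,2)$, and using $c_1(E)=(2,2)$) gives $\operatorname{ch}(\pi_{1*}E)=4+(6-c_2(E))[\mathrm{pt}]$, i.e. $\deg(\pi_{1*}E)=6-c_2(E)$. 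Comparing the two, $c_2(E)\le 6$.

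I expect the main obstacle to be exactly the upper bound, and within it the vanishing $R^1\pi_{1*}(\im A)=0$: it is what makes $\pi_{1*}E$ globally generated of non-negative degree, and it is the one step that genuinely uses that $\im A$ is a quotient of $\OO_Q(-1)^6$ — a feature of the resolution \eqref{succ_quadrica}, not of $E$ in isolation, which is why the bound improves from $8$ to $6$. The computation of $c_1$ and the lower bound $c_2(E)\ge 0$ are formal by comparison.
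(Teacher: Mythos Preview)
Your argument is correct. For $c_1$ and for $c_2\ge 0$ you do essentially what the paper does (the paper extracts $c_1$ by reading off Chern classes from the two short exact sequences rather than via $G^*\cong G(1)$, but this is cosmetic). The upper bound, however, is handled differently. The paper simply twists the two short exact sequences by $\OO_Q(-1,0)$ and chases cohomology on $Q$: one finds $\HH^1(E(-1,0))=\HH^2(E(-1,0))=0$ directly from the long exact sequences (using only that $\HH^i(\OO_Q(-1,0))=0$ for all $i$ and $\HH^2(\OO_Q(-2,-1))=0$), so $\chi(E(-1,0))=\hh^0(E(-1,0))\ge 0$, and Riemann--Roch on $Q$ gives $\chi(E(-1,0))=6-c_2(E)$. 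Your route---showing $R^1\pi_{1*}G=0$ fibrewise, deducing that $\pi_{1*}E$ is globally generated, and computing $\deg(\pi_{1*}E)=6-c_2(E)$ via Grothendieck--Riemann--Roch---reaches the same inequality, and indeed by Leray your $\deg(\pi_{1*}E)$ equals the paper's $\chi(E(-1,0))$. What you gain is a geometric picture of $\pi_{1*}E$ as a quotient of $\OO_{\PP^1}^6$; what the paper gains is brevity, since it avoids base change and relative Riemann--Roch and uses only the tautology $\hh^0\ge 0$.
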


\begin{proof} We split sequence \eqref{succ_quadrica} into two short exact sequences:
\begin{align}
\label{LHS} &0 \to E^*(-1) \to \OO_Q(-1)^6 \to F\to 0,\\
\label{RHS} &0 \to F \to \OO_Q^6 \to E\to 0,
\end{align}
and compute invariants. From \eqref{RHS} we deduce $c_1(E)=(a,b)=-c_1(F)$, while from \eqref{LHS} we get $c_1(E^*(-1)) +(-a, -b)=(-6,-6)$. Since $\rk(E)=2$, we have an isomorphism $E^* \simeq E(-a,-b)$, and hence $c_1(E^*(-1))=(-a-2,-b-2)$. Putting all together we conclude that $(a,b)=(2,2)$. 

Moving on to the bounds on $c_2$, we first remark that the globally generated bundle $E$ must have $c_2(E) \ge 0$. For the upper bound, we tensor the two sequences \eqref{LHS} and \eqref{RHS} by $\OO_Q(-1,0)$ and compute cohomology.
We get that $\HH^1(E(-1,0))=\HH^2(E(-1,0))=0$, so that $\chi(E(-1,0))=\hh^0(E(-1,0)) \ge 0$. Computing the same Euler characteristic using Chern classes, we get $\chi(E(-1,0))=6-c_2(E) \ge 0$.
\end{proof}

\smallskip

\begin{rem}\label{rem_3fold} 
Consider the rational Gauss map $\gamma:\check{\GG}(1,5) \dashrightarrow \GG(1,5)$, associating to a tangent hyperplane its unique tangency point. It is defined by the partial derivatives of $\Pfaff$, the generic $6\times 6$ Pfaffian determinant, that is, by the $4 \times 4$ principal minors' Pfaffians. Given a quadric surface $Q$ contained in $\check{\GG}(1,5) \setminus \GG(1,5)$ the equations defining $\gamma$ cannot all vanish on $Q$, since the rank there is constant and equal to 4, therefore the restriction
\[\gamma|_Q: Q \to \gamma(Q) \subset \GG(1,5)\]
is a regular map. For a line $\ell \in \GG(1,5)$, the fibre of $\gamma$ over $\ell$ consists of all hyperplanes $H$ tangent to $\GG(1,5)$ such that $\T_\ell\GG(1,5) \subseteq H$, so $\gamma^{-1}(\ell) \simeq (\T_\ell\GG(1,5))\check{\vrule height1.3ex width0pt} \,$ is a $5$-dimensional linear space. 

More in detail, if we choose a basis $\{e_0,\ldots,e_5\}$ of $V_6$ and $\ell=<e_0,e_1>$, then the tangent space $\T_\ell\GG(1,5)$ is defined by equations $p_{ij}=0$ for $i \ge 2$, so it 
coincides with the space of $6 \times 6$ skew-symmetric matrices having all zero entries in the first two rows and columns, that is, the space spanned by a sub-Grassmannian $\GG(1,3)$. Therefore,  the general element of 
$\T_\ell\GG(1,5)$ has rank 4, and those of rank 2 form a quadric hypersurface.
This means that the fibres of $\gamma|_Q$ are of the form $(\T_\ell\GG(1,5))\check{\vrule height1.3ex width0pt} \,\cap Q$, the intersection of a 5-dimensional linear space with a quadric surface. If $(\T_\ell\GG(1,5))\check{\vrule height1.3ex width0pt}\, \cap Q$ had positive dimension, the rank on $Q$ would not be constant, hence these fibres must consist of either 1 or 2 points: in other words, $\deg(\gamma|_Q)$ is either 1 or 2.

From the vector bundle point of view, the Gauss map restricted to $Q$ is given by the rank 2 bundle $E$ from sequence \eqref{succ_quadrica}; indeed, recall from \cite{Fania_Mezzetti} (and refer to the excellent notes \cite{note_grass_arrondo} for details) that, given a globally generated rank 2 vector bundle $E$ on $Q$, and given a fixed $(N+1)$-dimensional  vector subspace $V_{N+1}\subset\HH^0(E)$ generating the global sections, we can construct a map $\varphi_E: Q \to \GG(1,N)$ from $Q$ to the Grassmannian of lines in 
$\PP^N=\PP (V_{N+1})$. 

The map $\varphi_E$ coincides with $\gamma|_Q$ and is therefore a regular map of degree 1 or 2. It is worth noticing that a similar reasoning in \cite[Proposition 2.4]{Fania_Mezzetti} entailed that the map $\varphi_E$ was an embedding. Here, the fact that our system of constant rank matrices is quadratic makes the difference: in what follows we will find examples where $\varphi_E$ is not an embedding.


Consider also $\PP(E)$, the projective bundle associated to $E$. One can prove that $\varphi_E$ is equivalent to a map $\bar{\varphi}_E:  \PP(E) \to \PP^N$ of the corresponding ruled variety obtained by taking the union of all lines defined by the points of $Q$, having the same degree of  ${\varphi}_E$.  Let $Y$ be the image of $\bar{\varphi}_E$; since $Q$ is a surface, $Y$ is a threefold and the following equality holds:
\begin{equation}\label{relazione chern}
c_2(E)=c_1^2(E) -\deg(\bar{\varphi}_E) \cdot \deg (Y)= 8 -\deg(\varphi_E) \cdot \deg (Y).
\end{equation} 
\end{rem}

\section{Globally generated vector bundles with $c_1=(2,2)$ and main result.}

There is a finite list of vector bundles of rank $2$ on a smooth quadric $Q$ that can appear in an exact sequence of the form (\ref{succ_quadrica}). The first ones that come to mind are of course the ones decomposing as direct sum of two line bundles, that we list below.

\begin{prop}\label{decomp}
Let $E$ be a \emph{decomposable} globally generated vector bundle of rank $2$ on a smooth quadric surface $Q$, fitting into an exact sequence of type \eqref{succ_quadrica}. Then
\[E=\OO_Q(a,b) \oplus \OO_Q(2-a,2-b),\] 
with $0\le a,b \le 2$, and the following cases can occur:
\begin{enumerate}
\item[{\em (DEC1)}]  $a=b=0$,   $E=\OO_Q \oplus \OO_Q(2)$, $c_2(E)=0$;
\item[{\em (DEC2)}]  $a=b=1$, $E=\OO_Q(1) \oplus \OO_Q(1)$, $c_2(E)=2$;
\item[{\em (DEC3)}] $a=2$, $b=1$ (and its symmetric),  $E=\OO_Q(2,1) \oplus \OO_Q(0,1)$, $c_2(E)=2$;
\item[{\em (DEC4)}] $a=2$, $b=0$ (and its symmetric),  $E=\OO_Q(2,0) \oplus \OO_Q(0,2)$, $c_2(E)=4$. 
\end{enumerate}
\end{prop}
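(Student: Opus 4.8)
The plan is to reduce the statement to two ingredients already at our disposal: the Chern class constraints of Proposition~\ref{prop_chern}, and the elementary description of globally generated line bundles on $Q\cong\PP^1\times\PP^1$. First I would pin down the shape of $E$. Since $\Pic(Q)=\Z\oplus\Z$, a decomposable rank $2$ bundle is $E=\OO_Q(a,b)\oplus\OO_Q(a',b')$ for some integers $a,b,a',b'$; additivity of the first Chern class together with $c_1(E)=(2,2)$ from Proposition~\ref{prop_chern} forces $(a',b')=(2-a,2-b)$, whence $E=\OO_Q(a,b)\oplus\OO_Q(2-a,2-b)$.

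Next I would impose global generation. A line bundle $\OO_Q(m,n)=\pi_1^*\OO_{\PP^1}(m)\otimes\pi_2^*\OO_{\PP^1}(n)$ is globally generated precisely when $m\ge0$ and $n\ge0$: the ``if'' because it is then a tensor product of pullbacks of globally generated bundles, the ``only if'' by restricting to the two rulings of $Q$. Since a direct sum of line bundles is globally generated if and only if each summand is, applying this to both summands of $E$ yields $0\le a\le2$ and $0\le b\le2$. There are then nine pairs $(a,b)$ with $a,b\in\{0,1,2\}$; but $E$ is unchanged under the exchange of its two summands, i.e. replacing $(a,b)$ by $(2-a,2-b)$, and it is carried to its ``symmetric'' bundle under the exchange of the two $\PP^1$-factors of $Q$, i.e. replacing $(a,b)$ by $(b,a)$. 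Modulo these two involutions the representatives are $(0,0)$, $(1,1)$, $(2,1)$ and $(2,0)$, which give exactly the bundles (DEC1)--(DEC4). Finally, for $E=\OO_Q(a,b)\oplus\OO_Q(c,d)$ one has $c_2(E)=c_1(\OO_Q(a,b))\cdot c_1(\OO_Q(c,d))=ad+bc$ from the intersection numbers of $Q$ (the two ruling classes have self-intersection $0$ and meet in one point), and substituting gives $c_2=0,2,2,4$ in the four cases, in agreement with the bound $0\le c_2(E)\le6$ of Proposition~\ref{prop_chern}, so that constraint rules out nothing.

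I do not expect a genuine obstacle inside this proposition: the one point needing a little care is to confirm that the cohomological non-degeneracy conditions attached to the sequence~\eqref{succ_quadrica} do not eliminate any of the four candidates, which one checks by tensoring the relevant line bundles by $\OO_Q(-1,0)$ and computing the pertinent $\HH^1$ and $\HH^2$ via the K\"unneth formula, exactly as in the proof of Proposition~\ref{prop_chern}. The substantive statement — that each of (DEC1)--(DEC4) is actually realized by a skew-symmetric $6\times6$ matrix of constant rank $4$, and not merely permitted by the numerics — is the converse direction, which I would relegate to the explicit constructions of Section~4; here we are only establishing that this list of four is complete.
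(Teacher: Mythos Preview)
Your argument is correct and is exactly the elementary computation the paper has in mind: the paper states Proposition~\ref{decomp} without proof, treating it as an immediate consequence of the constraint $c_1(E)=(2,2)$ from Proposition~\ref{prop_chern} together with the standard fact that $\OO_Q(m,n)$ is globally generated if and only if $m,n\ge 0$. Your enumeration up to the two involutions (swapping summands, swapping rulings) and the $c_2$ computation via the intersection form on $Q$ are the natural details one would supply, and your remark that realizability of each case is deferred to the explicit constructions is precisely how the paper proceeds.
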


\smallskip

Indecomposable globally generated vector bundles with low first Chern class on a smooth quadric surface have been classified in the paper \cite{Ballico_Huh_Malaspina}. The authors prove that there exist such indecomposable and globally generated vector bundles of rank $2$ on $Q$ with $c_1=(2,2)$ if and only if $c_2=3,4,5,6,8$. 

In particular, there are no rank 2 globally generated vector bundles on $Q$ satisfying $c_1(E)=(2,2)$ and $c_2(E)=1$.

\medskip

One of the tools used in \cite{Ballico_Huh_Malaspina} is the notion of index: a pair $(p,q) \in \Z^2$ is an \emph{index} for a globally generated vector bundle $E$ on $Q$ if it is a maximal pair such that the twist $E(-p,-q)$ has global sections: $\HH^0(E(-p,-q) )\neq 0$. Here one considers $(p,q) \ge (p',q')$ if and only if $p \ge p'$ and $q\ge q'$. Since the ordering is only partial, a vector bundle can have more than one index. 

If $(p,q)$ is an index of our bundle $E$ with $p+q \ge 3$, then $E$ decomposes as a direct sum $E=\OO_Q(p,q) \oplus \OO_Q(2-p,2-q)$. On the other hand, if $p+q \le 1$ then $E$ is (Mumford-Takemoto) stable, simply because $E$ has rank $2$, hence its stability is equivalent to the vanishing of the three cohomology groups $\HH^0(E(-1))$, $\HH^0(E(-2,0))$, and 
$\HH^0(E(0,-2))$.

\begin{lem}\label{lemma_index}
Let $E$ be a vector bundle appearing in an exact sequence of type \eqref{succ_quadrica} as cokernel of a skew-symmetric constant rank matrix over the quadric surface $Q$, and let $(p,q)$ be an index for $E$. Then $(q,p)$ is an index for $E$; if $c_2(E) \le 5$, then $(p,q) > (0,0)$, and if $c_2(E)=6$, then $p=q=0$. 
\end{lem}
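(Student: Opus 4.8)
The symmetry statement $(q,p)$ is an index whenever $(p,q)$ is: this should follow immediately from the isomorphism $E \simeq E^*(1)$ coming from the skew-symmetry of $A$, combined with the existence of the automorphism of $Q$ swapping the two rulings (i.e. swapping the two $\PP^1$ factors). Concretely, pulling back $E$ along the swap automorphism exchanges $\OO_Q(p,q)$ with $\OO_Q(q,p)$, and since sequence \eqref{succ_quadrica} is itself symmetric under this involution (the map $A$ being skew, its twists and the bundles $\OO_Q(-1)^6$, $\OO_Q^6$ are all swap-invariant), the bundle $E$ is preserved up to this involution, so $\HH^0(E(-p,-q)) \neq 0 \iff \HH^0(E(-q,-p)) \neq 0$, and maximality is preserved. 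I would spell this out in one short paragraph.

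For the main claims I would argue by contradiction in each case. First suppose $c_2(E) \le 5$ and that some index $(p,q)$ does not satisfy $(p,q) > (0,0)$; by the symmetry just proved we may also use $(q,p)$. If $p+q \ge 3$ then by the remark preceding the lemma $E$ decomposes, and Proposition \ref{decomp} forces $c_2(E) \in \{0,2,4\}$ with an explicit summand structure; one then checks directly that in each decomposable case the (unique maximal) index is indeed $> (0,0)$ — for (DEC1) the index is $(2,2)$... wait, $\OO_Q \oplus \OO_Q(2)$ has index $(2,2)$ which is $>(0,0)$; for (DEC2), (DEC3), (DEC4) similarly the maximal index has both coordinates $\ge 0$ and at least... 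I would verify each is strictly positive in both coordinates or treat the boundary cases, so the decomposable subcase is fine. The real work is the stable range $p+q \le 1$: here I claim the only possibility not of the form $(p,q)>(0,0)$, i.e. with $\min(p,q) \le 0$, must be excluded. By stability $\HH^0(E(-1,-1)) = \HH^0(E(-2,0)) = \HH^0(E(0,-2)) = 0$, so no index can be $\ge (1,1)$, $\ge(2,0)$, or $\ge (0,2)$; combined with $E$ globally generated ($\HH^0(E) \neq 0$, so some index $\ge (0,0)$ exists) this pins the candidate indices down to a short explicit list, essentially $(1,0)$, $(0,1)$, and $(0,0)$. The claim $(p,q)>(0,0)$ then fails only if $(0,0)$ is the maximal index, equivalently $\HH^0(E(-1,0)) = \HH^0(E(0,-1)) = 0$. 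I would rule this out using the computation inside the proof of Proposition \ref{prop_chern}: there it is shown $\chi(E(-1,0)) = 6 - c_2(E)$ with $\HH^1 = \HH^2 = 0$ for that twist, so $\hh^0(E(-1,0)) = 6 - c_2(E)$, which is $> 0$ precisely when $c_2(E) \le 5$. Hence $(1,0)$ — and by symmetry $(0,1)$ — is a nonzero section locus, so $(0,0)$ cannot be maximal, contradiction. This gives $(p,q) > (0,0)$ for every index when $c_2(E) \le 5$.

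For the case $c_2(E) = 6$: now $\hh^0(E(-1,0)) = 6 - c_2(E) = 0$, and by the swap symmetry $\hh^0(E(0,-1)) = 0$ as well, so no index can dominate $(1,0)$ or $(0,1)$; since $E$ is globally generated some index is $\ge (0,0)$, and the only such index compatible with these vanishings is $(0,0)$ itself, giving $p = q = 0$. (One should also note $c_2 = 6$ forces $E$ indecomposable — consistent with Proposition \ref{decomp} listing no decomposable bundle with $c_2 = 6$ — and stable, but this is not strictly needed for the statement.) The main obstacle I anticipate is making the enumeration of candidate indices in the stable range airtight: one must be careful that "maximal" is with respect to the partial order, so ruling out that $(0,0)$ is maximal requires producing a strictly larger index, which is exactly what the $\hh^0(E(-1,0)) > 0$ computation delivers — I would make sure to phrase it so that $(1,0)$ being a section locus genuinely forces the existence of some index $\ge (1,0)$, hence $> (0,0)$.
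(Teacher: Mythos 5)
Your argument follows the paper's proof in essentially the same way: the symmetry claim is deduced from the symmetry of the construction with respect to the two rulings, and the index bounds come from the equality $\hh^0(E(-1,0))=6-c_2(E)$ established in the proof of Proposition~\ref{prop_chern}, combined with global generation (the paper's proof is exactly these two observations). Your additional discussion of maximality and the decomposable/stable case split merely spells out details the paper leaves implicit, at the same level of rigor as the original.
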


\begin{proof}
The first statement is an immediate consequence of the symmetry of the construction with respect to the two rulings. The second statement follows from the equality $\hh^0(E(-1,0))=6-c_2(E)$ obtained in the proof of Proposition \ref{prop_chern}.
\end{proof}

\medskip

\begin{prop}\label{spezzamento}
Let $E$ be a globally generated vector bundle of rank $2$ on a smooth quadric surface $Q$, fitting into an exact sequence of type \eqref{succ_quadrica}. If $E$ has $(2,0)$ as index, then it decomposes as a direct sum $\OO_Q(0,2)\oplus\OO_Q(2,0)$.
\end{prop}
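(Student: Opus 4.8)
The plan is to exploit the symmetry of the two rulings together with the maximality built into the notion of index, reducing everything to two elementary cohomology vanishings.

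First, by Lemma \ref{lemma_index} the pair $(0,2)$ is also an index for $E$, and by Proposition \ref{prop_chern} we have $c_1(E)=(2,2)$. Being an index, $(2,0)$ produces a nonzero section $s\in\HH^0(E(-2,0))=\Hom(\OO_Q(2,0),E)$. I would first check that its zero-scheme has no divisorial component: if it contained an effective divisor $D$, then $s$ would factor through an inclusion $\OO_Q(2,0)\otimes\OO_Q(D)\hookrightarrow E$, and since $\OO_Q(D)$ has bidegree $\ge(0,0)$ and nonzero, this would yield a pair strictly larger than $(2,0)$ with nonvanishing twisted sections, contradicting maximality. Hence the zero-scheme of $s$ is a zero-dimensional subscheme $V$ (possibly empty) of codimension $2$; it is therefore locally a complete intersection, so the Koszul complex shows the cokernel of $s$ is torsion-free, and being rank one on the smooth surface $Q$ it has the form $\II_V\otimes L$ with $c_1(L)=c_1(E)-(2,0)=(0,2)$. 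This gives
\begin{equation*}
0\to\OO_Q(2,0)\to E\to\II_V\otimes\OO_Q(0,2)\to 0,
\end{equation*}
so in particular $c_2(E)=4+\ell(V)$.

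Next, applying the same reasoning to the index $(0,2)$ yields a nonzero map $s'\colon\OO_Q(0,2)\to E$. Composing $s'$ with the surjection $E\to\II_V\otimes\OO_Q(0,2)$ gives an element of $\Hom(\OO_Q(0,2),\II_V\otimes\OO_Q(0,2))=\HH^0(\II_V)$. If $V\neq\emptyset$ this group vanishes, since $\HH^0(\OO_Q)$ consists of constants; then $s'$ would factor through $\OO_Q(2,0)\subset E$, producing a nonzero element of $\Hom(\OO_Q(0,2),\OO_Q(2,0))=\HH^0(\OO_Q(2,-2))=0$, a contradiction. Hence $V=\emptyset$, $c_2(E)=4$, and the sequence becomes $0\to\OO_Q(2,0)\to E\to\OO_Q(0,2)\to 0$. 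Finally, compose $s'$ with $E\to\OO_Q(0,2)$ to obtain an element of $\Hom(\OO_Q(0,2),\OO_Q(0,2))=\HH^0(\OO_Q)=\C$; if it were zero the same factorization argument would again give a nonzero map $\OO_Q(0,2)\to\OO_Q(2,0)$, which is impossible, so it is a nonzero scalar and $s'$ splits the sequence, giving $E\cong\OO_Q(2,0)\oplus\OO_Q(0,2)$.

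The main obstacle is the control of the section attached to the index $(2,0)$: one must rule out a divisorial part of its zero locus (handled by maximality of the index) and confirm torsion-freeness of the cokernel, after which the whole statement collapses onto the vanishings $\HH^0(\OO_Q(2,-2))=0$ and $\HH^0(\II_V)=0$ for $V\neq\emptyset$. This is also precisely what forces $c_2(E)=4$ and excludes the a priori possibility $c_2(E)=5$ allowed by Lemma \ref{lemma_index}.
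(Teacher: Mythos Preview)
Your argument is correct and follows the same underlying idea as the paper's proof: obtain the two extensions
\[
0\to\OO_Q(2,0)\to E\to\OO_Q(0,2)\to 0,\qquad 0\to\OO_Q(0,2)\to E\to\OO_Q(2,0)\to 0,
\]
compose the injection from one with the surjection from the other, and observe that the resulting endomorphism of a line bundle is either zero (which forces a nonzero map in $\Hom(\OO_Q(0,2),\OO_Q(2,0))=\HH^0(\OO_Q(2,-2))=0$, a contradiction) or a nonzero scalar, splitting the extension. The only difference is that the paper imports both extensions directly from \cite{Ballico_Huh_Malaspina}, whereas you derive them from scratch via the Serre/Koszul construction, using maximality of the index to kill the divisorial part and then the second index to force the residual zero-scheme $V$ to be empty. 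Your version is therefore more self-contained and, as you note, makes explicit why $c_2(E)=4$ is forced; the paper's version is shorter but relies on an external classification result.
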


\begin{proof}
According to \cite{Ballico_Huh_Malaspina}, if $(2,0)$ is an index, then $E$ arises in the following extension:
\begin{equation}\label{ext 1}
0\to\OO_Q(2,0)\xrightarrow{\phi} E\to\OO_Q(0,2)\to 0.
\end{equation}
Now if $E$ fits into sequence \eqref{succ_quadrica}, then by Lemma \ref{lemma_index} $(2,0)$ is an index if and only if $(0,2)$ is also an index, and this, again from \cite{Ballico_Huh_Malaspina}, is equivalent to an extension of type
\begin{equation}\label{ext 2}
0\to\OO_Q(0,2)\to E\xrightarrow{\psi}\OO_Q(2,0)\to 0.
\end{equation}
The composition $\psi \circ \phi \in \Hom(\OO_Q(2,0),\OO_Q(2,0))$ can either be the zero map or a scalar multiple of the identity. If it were zero, then it would induce a non-zero map $\OO_Q(2,0) \to \Ker(\psi)=\OO_Q(0,2)$, which is impossible. Hence it must be a scalar multiple of the identity, meaning that the extension \eqref{ext 1} must split. 
\end{proof}

\smallskip

We are now ready to list all indecomposable bundles that can appear in the long exact sequence  \eqref{succ_quadrica}; the following result, combined with  Proposition \ref{decomp}, gives a complete picture of all possible cases.

\begin{prop}\label{indecomp}
Let $E$ be an {\em indecomposable} globally generated vector bundle of rank $2$ on a smooth quadric surface $Q$, fitting into an exact sequence of type  \eqref{succ_quadrica}. Then one of the following occurs:
\begin{enumerate}
\item[{\em (IND1)}] $E$ has $(1,1)$ as index, $c_2(E)=3$, and there is a short exact sequence of the form \[0\to\OO_Q\to\OO_Q(1)\oplus\OO_Q(1,0)\oplus\OO_Q(0,1)\to E\to 0;\]
the restriction of $E$ on both rulings of $Q$ is $\OO_{\PP^1}(1)\oplus \OO_{\PP^1}(1)$.
\item[{\em (IND2)}] $E$ has $(1,1)$ as index, $c_2(E)=4$, and there is a resolution of type 
\[0 \to \OO_Q(-1) \to \OO_Q^2 \oplus \OO_Q(1) \to E \to 0;\]
in this case the restriction of $E$ to both rulings is $\OO_{\PP^1}(1) \oplus \OO_{\PP^1}(1)$.
\item[{\em (IND3)}] $E$ has indices $(1,0)$ and $(0,1)$ (hence it is a stable bundle), $c_2(E)=4$, and it fits into the short exact sequence (and its symmetric equivalent)
\[0 \to \OO_Q(1,0) \to E \to \mathcal{I}_Z(1,2) \to 0,\]
where $Z$ is a zero-dimensional scheme of degree $2$. $E$ restricts as $\OO_{\PP^1}(1) \oplus \OO_{\PP^1}(1)$ on one ruling, and as $\OO_{\PP^1} \oplus \OO_{\PP^1}(2)$ on the other one.
\item[{\em (IND4)}] $E$ is a stable bundle having indices $(1,0)$ and $(0,1)$, $c_2(E)=5$, fitting in the exact sequence 
\[0\to\OO_Q\to E\to \II_Z(2)\to 0,\] 
where $Z$ is a zero-dimensional scheme of degree $5$.
\item[{\em (IND5)}] $E$ is a stable bundle having index $(0,0)$ and $c_2(E)=6$, and it fits in the exact sequence \[0\to\OO_Q\to E\to \II_Z(2)\to 0,\]
where $Z$ is a zero-dimensional scheme of degree $6$.
\end{enumerate}
\end{prop}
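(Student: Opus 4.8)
The plan is to proceed by cases according to the possible indices $(p,q)$ of the indecomposable bundle $E$, and then to match each case against the classification of indecomposable globally generated rank $2$ bundles on $Q$ with $c_1=(2,2)$ given in \cite{Ballico_Huh_Malaspina}. By Lemma \ref{lemma_index}, the index is symmetric under swapping the two rulings, and it is bounded: if $c_2(E)=6$ then the only index is $(0,0)$, and if $c_2(E)\le 5$ then every index $(p,q)$ satisfies $(p,q)>(0,0)$, i.e.\ $p,q\ge 1$. Moreover, if some index has $p+q\ge 3$ then $E$ splits, contrary to our assumption that $E$ is indecomposable. Combining these constraints, the only surviving possibilities are: $(p,q)=(1,1)$ with $c_2\in\{3,4\}$ by the list of admissible $c_2$ in \cite{Ballico_Huh_Malaspina} (recall $c_2=1,2$ do not occur for indecomposable $E$, and $c_2\ge 5$ is incompatible with index $(1,1)$ by \cite{Ballico_Huh_Malaspina}); $(p,q)=(1,0)$ together with $(0,1)$ (the stable case, since then $p+q\le 1$) with $c_2\in\{4,5\}$; and the index $(0,0)$ with $c_2=6$. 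The case of a single index $(2,0)$ is excluded by Proposition \ref{spezzamento}, as it forces $E$ to decompose; similarly an index $(2,1)$ or $(2,2)$ has $p+q\ge 3$ and splits $E$. This already pins down that we are in one of the five listed situations (IND1)--(IND5); what remains is to identify the precise resolution/extension and the restriction to the rulings in each case.

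For each surviving index one reads off from \cite{Ballico_Huh_Malaspina} the corresponding short exact sequence or resolution. For index $(1,1)$ one gets, depending on $c_2$, either the presentation $0\to\OO_Q\to\OO_Q(1)\oplus\OO_Q(1,0)\oplus\OO_Q(0,1)\to E\to 0$ when $c_2=3$ (IND1) or the resolution $0\to\OO_Q(-1)\to\OO_Q^2\oplus\OO_Q(1)\to E\to 0$ when $c_2=4$ (IND2); in both cases restricting to a line of either ruling and using $c_1=(2,2)$, together with global generation (which forbids a negative summand), forces the splitting type $\OO_{\PP^1}(1)\oplus\OO_{\PP^1}(1)$ on each ruling. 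For the stable case with indices $(1,0),(0,1)$, the fact that $(1,0)$ is an index means $\HH^0(E(-1,0))\ne 0$, so a nonzero section gives an inclusion $\OO_Q(1,0)\hookrightarrow E$; its cokernel is a twisted ideal sheaf $\II_Z(1,2)$ of a zero-dimensional $Z$, and computing $c_2$ of the extension gives $\deg Z=c_2(E)-2$, so $\deg Z=2$ when $c_2=4$ (IND3) and one checks the other ruling restricts as $\OO_{\PP^1}\oplus\OO_{\PP^1}(2)$; when $c_2=5$ one instead uses a section of $E$ itself (here $(0,0)$ is essentially forced as the relevant sub-line-bundle twist) to get $0\to\OO_Q\to E\to\II_Z(2)\to 0$ with $\deg Z=5$ (IND4). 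Finally, for $c_2=6$ the unique index is $(0,0)$, a general section of $E$ vanishes on a zero-dimensional scheme $Z$ of degree $c_2(E)=6$, and $E$ fits in $0\to\OO_Q\to E\to\II_Z(2)\to 0$ (IND5); stability in (IND4) and (IND5) is automatic from rank $2$ once one knows $\HH^0(E(-1))=\HH^0(E(-2,0))=\HH^0(E(0,-2))=0$, which follows from the index being $\le(1,0),(0,1)$, respectively $(0,0)$.

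The main obstacle I expect is not the bookkeeping on indices --- that part is essentially forced by Lemma \ref{lemma_index}, Proposition \ref{spezzamento} and the numerics of \cite{Ballico_Huh_Malaspina} --- but rather making sure the \emph{additional} constraint that $E$ actually arises as the cokernel in \eqref{succ_quadrica} (not merely as some globally generated bundle with the right Chern classes) is compatible with, and does not further restrict, each of the five listed cases. Concretely: one must verify that in each of (IND1)--(IND5) the non-degeneracy conditions in cohomology implicit in \eqref{succ_quadrica} (those guaranteeing that $A$ has the stated size $6$ and constant rank $4$, e.g.\ $h^0(E)$, $h^0(E^\ast(1))$ and the vanishing of various $\HH^1$'s) are genuinely satisfiable, and that the restrictions to the two rulings are exactly as claimed rather than some more degenerate splitting type allowed a priori. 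For the ruling restrictions one argues that on a general line $\ell$ of a ruling the matrix $A|_\ell$ still has constant rank $4$, so $E|_\ell$ is the cokernel of a constant rank $4$ skew map of $\OO_{\PP^1}(-1)^6\to\OO_{\PP^1}^6$, hence is a rank $2$ globally generated bundle of degree $2$ on $\PP^1$, which is either $\OO_{\PP^1}(1)^{\oplus 2}$ or $\OO_{\PP^1}\oplus\OO_{\PP^1}(2)$; deciding which one occurs is then read off from the index data, and this is the one genuinely case-sensitive computation, to be cross-checked with \texttt{Macaulay2} \cite{M2} on explicit models.
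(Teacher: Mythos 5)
Your argument is correct and is essentially the paper's own proof, which consists precisely of combining the classification of \cite{Ballico_Huh_Malaspina} with Proposition \ref{prop_chern}, Lemma \ref{lemma_index} and Proposition \ref{spezzamento} to eliminate the remaining cases; the extra details you supply (codimension-two vanishing of the section by maximality of the index, $\deg Z=c_2-2$ resp.\ $c_2$, splitting type on the rulings) are consistent with that source. One minor slip: $(p,q)>(0,0)$ in the partial order of Lemma \ref{lemma_index} means $(p,q)\neq(0,0)$ with $p,q\ge 0$, not $p,q\ge 1$, as your own subsequent inclusion of the indices $(1,0),(0,1)$ for $c_2=4,5$ confirms.
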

\begin{proof}
Analyzing the classification from \cite{Ballico_Huh_Malaspina} in light of Proposition \ref{prop_chern}, Lemma \ref{lemma_index}, and Proposition \ref{spezzamento}, we are able to rule out a few cases, and are left with the ones listed above.
\end{proof}

A very natural question is whether all globally generated bundles appearing in Propositions \ref{decomp} and \ref{indecomp} are attained with our construction: a positive answer to this question is our main result.

\smallskip

\begin{thm}\label{main theorem}
Let  $X \subset \PP^{14}$  be the cubic Pfaffian hypersurface parameterising $6 \times 6$ skew-symmetric matrices of rank at most 4. For all cases listed in Propositions \ref{decomp} and \ref{indecomp}, there exists a smooth quadric surface $Q \subset X$, not intersecting the Grassmannian $\GG(1,5)$, giving rise to a long exact sequence of type 
\begin{equation}\tag{\ref{succ_quadrica}}
0 \to E^*(-1) \to \OO_Q(-1)^6 \to \OO_Q^6 \to E\to 0,
\end{equation} 
where the vector bundle $E$ is of the desired type.
\end{thm}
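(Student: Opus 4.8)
The plan is to prove existence case by case, matching each bundle in Propositions~\ref{decomp} and~\ref{indecomp} with an explicit $6\times 6$ skew-symmetric matrix of linear (in the quadric's coordinates, i.e.\ bilinear on $\PP^1\times\PP^1$) entries of constant rank~4. Throughout, the strategy is to produce a matrix $A$ whose cokernel is the prescribed $E$, then verify the three things that make the construction valid: (i) $A$ has rank exactly~4 at every point of $Q$ (equivalently, the $4\times 4$ Pfaffians have no common zero on $Q$); (ii) the cokernel is a vector bundle with the right Chern classes $c_1(E)=(2,2)$, $c_2(E)$ as listed; (iii) $Q$ does not meet $\GG(1,5)$, which is automatic from constancy of rank~$4$. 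Since the entries of $A$ must be sections of $\OO_Q(1,1)$, the six columns of $\OO_Q(-1)^6$ each carry degree $(1,1)$, and the resulting $E$ is globally generated of the required numerical type; the bulk of the work is choosing $A$ so that $\coker A$ is the desired bundle rather than a different one with the same invariants.

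The natural division, following the paper's own roadmap in the introduction, is by method. First I would dispatch the decomposable cases \textbf{(DEC1)--(DEC4)} directly: for each, write $E=\OO_Q(a,b)\oplus\OO_Q(2-a,2-b)$ and exhibit a block or banded skew-symmetric matrix, modeled on the $\PP^2$ constructions of \cite{Manivel_Mezzetti, bo_fa_me}, whose constant rank~4 can be checked by an explicit Pfaffian computation (and, where convenient, verified in Macaulay2 \cite{M2}). Next, for the ``small'' indecomposable cases, I would use the extension presentations from Proposition~\ref{indecomp}: for \textbf{(IND1)} and \textbf{(IND2)} the bundle has index $(1,1)$ and a short resolution by split bundles, so one looks for a skew matrix compatible with that resolution; for \textbf{(IND3)}, with the jumping restriction $\OO_{\PP^1}\oplus\OO_{\PP^1}(2)$ on one ruling, I would start from a known constant-rank example on $\PP^2$ and extend it to the three-dimensional coordinate space of $\PP^3\supset Q$ as announced in Section~6, checking that the extra variable does not destroy constancy of rank. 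For the ``large'' cases \textbf{(IND4)} ($c_2=5$) and \textbf{(IND5)} ($c_2=6$), where $E$ is stable with $E\simeq E^*(1)$ and fits in $0\to\OO_Q\to E\to\II_Z(2)\to 0$, I would use the projection technique of Section~5: build a larger skew-symmetric matrix (size $8$ or more) of constant rank, whose cokernel is an auxiliary bundle, and then project to size~6 by choosing a suitable subspace, arranging that the degeneracy scheme becomes the length-$5$ or length-$6$ scheme $Z$ and that constant rank~4 is preserved under the projection.

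For each constructed $E$ I would then confirm it is the right bundle, not merely numerically correct: compute $\hh^0(E(-1,0))=6-c_2(E)$ and the other twisted cohomology groups to pin down the index via Lemma~\ref{lemma_index}, use Proposition~\ref{spezzamento} to exclude the accidental splitting $\OO_Q(2,0)\oplus\OO_Q(0,2)$ when needed, and read off the restriction to the two rulings by restricting the matrix $A$ to a line of each family and computing the cokernel on $\PP^1$. Indecomposability, where required, follows from stability (cases IND3--IND5) or from the explicit resolution plus a $\Hom$ computation (cases IND1--IND2). Finally I would note that $Q\cap\GG(1,5)=\emptyset$ is immediate once rank~$4$ is verified everywhere, and that smoothness of $Q$ is part of the setup.

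The main obstacle I expect is the constant-rank verification in the higher-$c_2$ cases, especially \textbf{(IND4)} and \textbf{(IND5)}: constancy of rank~4 is a genuinely delicate, non-generic condition (the generic $6\times 6$ skew matrix of bilinear entries drops rank on a positive-dimensional locus of $Q$), so the projection must be chosen with care to avoid the locus where the rank falls to~2 or stays~6, and proving that the chosen projection works may require either a clever closed-form Pfaffian identity or an appeal to \cite{M2} for a specific witness. A secondary difficulty is ensuring, in the cases built by extending $\PP^2$-examples (Section~6), that the extension to a $\PP^3$ of coordinates really yields a quadric $Q$ disjoint from $\GG(1,5)$ and not contained in the hyperplane of the original $\PP^2$, i.e.\ that the new example is genuinely two-dimensional-in-a-quadric rather than a disguised plane.
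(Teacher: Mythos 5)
Your overall strategy coincides with the paper's: the theorem is proved constructively, case by case, by exhibiting explicit $6\times 6$ skew-symmetric matrices of constant rank $4$ on a smooth quadric, using direct constructions, the projection technique of Section 5, and the extend-and-restrict method of Section 6. The genuine gap is that your proposal never produces the witnesses. For an existence statement whose entire content is that each of the nine bundles of Propositions \ref{decomp} and \ref{indecomp} is actually attained, the delicate part is precisely writing down, for every case, a concrete matrix and verifying constant rank $4$ and the identity of the cokernel; the paper does exactly this with the matrices \eqref{O+O(2)}, \eqref{O(1)^2}, \eqref{c2=3}, \eqref{c2=5}, \eqref{c2=6}, \eqref{c2=4 con proiezione2}, \eqref{c2=4 con proiezione3}, \eqref{O(2,0)+O(0,2)} and \eqref{O(2,1)+O(0,1)}, several of them found through Macaulay2 searches. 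Deferring all of this to ``choose the projection with care'' leaves existence unproved: constant rank $4$ is a non-generic condition, and whether every listed bundle is attainable is exactly the content of the theorem, not something that can be assumed to work out.

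Beyond incompleteness, some of your case assignments would fail. You propose to settle (DEC1)--(DEC4) ``directly'' by block or banded matrices; but a block-diagonal skew-symmetric $6\times6$ of constant rank $4$ on $Q$ can only be of type $1+5$ or $3+3$, and in the $3+3$ case each block has constant rank $2$ with cokernel $\OO_Q(1)$ as in \eqref{succ_AP}, so direct-sum constructions yield only (DEC1) and (DEC2). In the paper, (DEC4) is obtained only via projection (Example \ref{c_2=4}) or extend-and-restrict, and (DEC3) is explicitly stated to be unreachable by the direct and projection techniques, requiring the extend-and-restrict construction of Example \ref{DEC 3}. Conversely, you route (IND3) through extend-and-restrict, but the four plane orbits of \cite{Manivel_Mezzetti} are seen in the paper to extend-and-restrict to (DEC1), (DEC4), (DEC3) and (IND1); the paper obtains (IND3) instead by projecting the $8\times 8$ sum of two blocks \eqref{blocco_(ii)}, with the centre of projection chosen to meet exactly two of the four relevant $5$-spaces --- the discriminating observation at the end of Section 5 that distinguishes (IND2), (IND3) and (DEC4) and that your plan lacks. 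Likewise (IND1) and (IND2) are produced by projection, not by directly guessing a matrix compatible with the resolutions. So while the philosophy matches the paper's, a complete proof requires both the explicit examples and the correct matching of technique to case.
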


\smallskip

We devote the rest of the paper to a constructive proof of Theorem \ref{main theorem}, that we achieve by giving explicit examples of the vector bundle $E$ in all cases. To this end, we use three different techniques: in Section 4 we use decomposable bundles to settle cases {(DEC1)} and {(DEC2)} from Proposition \ref{decomp}. Then in Section 5 we introduce and develop a projection technique, that allows us to construct case {(DEC4)} from Proposition \ref{decomp}, as well as all 5 instances of Proposition \ref{indecomp}. A different technique is needed for the remaining case {(DEC3)} of Proposition \ref{decomp}: this is done in Section 6.

\section{Construction techniques, part 1: some decomposable bundles}
As anticipated, in this section we construct  examples of smooth quadric surfaces contained in $\check{\GG}(1,5) \setminus \GG(1,5)$ that give rise to the decomposable bundles $\OO_Q \oplus \OO_Q(2)$ and $\OO_Q(1) \oplus \OO_Q(1)$, that is, cases {(DEC1)} and {(DEC2)} from Proposition \ref{decomp}.

\begin{ex}\label{DEC 1}
Consider the decomposable vector bundle {(DEC1)} $E= \OO_Q \oplus \OO_Q(2)$ from Proposition \ref{decomp}, having $c_2(E)=0$.
Since $h^0(E)=10$, the image of the map $\varphi_E: Q\to\GG(1,9)$ represents the lines of a cone over $v_2(Q)$. Then, one can project this cone to $\PP^5$ and get a map from $Q$ to $\GG(1,5)$. Therefore, if one has a smooth quadric surface in $\check{\GG}(1,5) \setminus \GG(1,5)$ corresponding to this bundle, by duality it must be contained in the linear span of a sub-Grassmannian $\GG(1,H)$ where $H\subset\PP^5$ is a hyperplane. But $\GG(1,H)$ has codimension $3$ in its linear span $\PP(\wedge^2 V_5)\simeq\PP^9$, so a \emph{general} quadric surface contained in this $\PP^9$ will be disjoint from $\GG(1,H)$.  After a linear change of coordinates, one can assume that the matrix representing a constant rank map $\OO_Q(-1)^6\to\OO_Q^6$ as in (\ref{succ_quadrica}) is a general $6\times 6$ skew-symmetric matrix of linear forms in four variables, suitably restricted to $Q$.

\smallskip

An explicit example is the following:

\begin{equation}\label{O+O(2)}
\left(\begin{array}{ccccc|c} 
\cdot& a& b& c& {d}& \cdot\\
-a& \cdot& a& b& c& \cdot\\
-b& -a& \cdot& {d}& a& \cdot\\
-c& -b& {-d}& \cdot& b& \cdot\\
{-d}& -c& -a& -b& \cdot& \cdot\\
\hline
\cdot& \cdot& \cdot& \cdot& \cdot& \cdot 
\end{array}\right),
\end{equation}
where for the reader's convenience we have adopted the convention to denote a zero in the matrix by a dot.

Matrix \eqref{O+O(2)} has Pfaffian vanishing on the reducible cubic surface union of the plane $\Pi:\{d=0\}$ and the quadric $Q:\{ad-bc=0\}$. As expected, the vector bundle corresponding to the restriction of (\ref{O+O(2)}) to the plane $\Pi$ is $\OO_{\PP^2} \oplus \OO_{\PP^2}(2)$.
\end{ex}

\begin{ex}\label{DEC 2}
From \cite[Proposition 3.5]{Ballico_Huh_Malaspina} we learn that on the quadric surface there is a rank 2 globally generated vector bundle $\cA_P=\pi_P^*(\T{\PP^2}(-1))$, where $\pi_P: Q\to\PP^2$ is the projection of centre a point 
$P\notin Q$. $\cA_P$ has first Chern class $c_1(\cA_P)=(1,1)$ and it has a locally free resolution of type 
\begin{equation}\label{ris AP}
0 \to \OO_Q(-1) \to \OO_Q^3 \to \mathcal{A}_P \to 0,
\end{equation}
where the first map is given by the equations of the point $P$.
By composing this resolution twisted by $(-1)$ with its dual, and remembering that $\mathcal{A}_P^* \simeq \mathcal{A}_P(-1)$, we find the following long exact sequence:
\begin{equation}\label{succ_AP}
\xymatrix@C-1.4ex@R-3ex{0 \to\OO_Q(-2) \ar@{^{(}->}[r]&\OO_Q(-1)^3\ar[rr] \ar[dr]&& \OO_Q^3 \ar@{->>}[r]& \OO_Q(1) \to 0.\\
&&\mathcal{A}_P(-1)\ar[ur]&&}
\end{equation}
The map in the middle is represented by a $3\times 3$ skew-symmetric matrix of linear forms in four variables, which has  constant rank $2$ outside the point $P$, where it becomes the zero matrix. Therefore it has constant rank two on every quadric disjoint from $P$. For instance, if $P=[1:0:0:1]$, a possible matrix is
\begin{equation}\label{blocco_O(1)}
\begin{pmatrix}
\cdot&a-d&b\\
-(a-d)&\cdot&c\\
-b&-c&\cdot
\end{pmatrix}.
\end{equation}

\medskip

Taking the direct sum of two $3\times 3$ blocks of the type described above, we obtain a $6\times 6$ matrix of constant rank 4 on the quadric $Q$, corresponding to the bundle {(DEC2)} $E=\OO_Q(1)\oplus \OO_Q(1)$.

\medskip

There is an interesting difference of behavior depending on whether or not the two points centre of projections coincide.

More in detail, if the two centres of projections are distinct points $P\neq P'$ not on $Q$, we obtain a $6 \times 6$ matrix, which has constant rank $4$ on $\PP^3\setminus\{P,P'\}$, and rank $2$ at the two points. For instance, taking $P=[1:0:0:1]$, $P'=[0:1:1:0]$, we can construct the matrix
\begin{equation}\label{O(1)^2}
\left(\begin{array}{ccc|ccc}
\cdot& a-d& b&\cdot&\cdot&\cdot\\
-a+d& \cdot& c&\cdot&\cdot&\cdot\\
-b& -c& \cdot& \cdot&\cdot&\cdot\\
\hline
\cdot&\cdot&\cdot& \cdot& a& b-c\\
\cdot&\cdot&\cdot& -a& \cdot& d\\
\cdot&\cdot&\cdot& -b+c& -d& \cdot
\end{array}\right).
\end{equation}
 
It is worth noticing that in this example the $\PP^3$ having coordinates 
$a,b,c,d$ is completely contained in $\check{\GG}(1,5)$: indeed, the rank of the matrix \eqref{O(1)^2} is  at most $4$ on all of $\PP^3$. Constant rank 4 is achieved on any quadric that does not contain the two points $P$ and $P'$, such as the smooth quadric $Q:\{ad-bc=0\}$.

With the notation of Remark \ref{rem_3fold}, the threefold $Y$ corresponding to the matrix (\ref{O(1)^2}) turns out to have $\deg (Y)= 6$, as expected. This has been checked with the help of Macaulay2.
$Y$ can be constructed by taking two isomorphic copies of $Q$ in two disjoint $\PP^3$s, then projecting them $2:1$ to two disjoint  
planes, and taking the union of the family of lines joining pairs of points under this correspondence. Its singular locus is formed by the two planes and a line.

\medskip

If instead we use the same point $P$ as centre of projection for both $3 \times 3$ blocks, the rank of the matrix drops to zero at $P$. For example, using the point $P=[1:0:0:1]$ as centre of projection, we obtain the matrix
\begin{equation}\label{O(1)^2 bis}
\left(\begin{array}{ccc|ccc}
\cdot& a-d& b&\cdot&\cdot&\cdot\\
-a+d& \cdot& c&\cdot&\cdot&\cdot\\
-b& -c& \cdot& \cdot&\cdot&\cdot\\
\hline
\cdot&\cdot&\cdot& \cdot& a-d& b\\
\cdot&\cdot&\cdot& -a+d& \cdot& c\\
\cdot&\cdot&\cdot& -b& -c& \cdot
\end{array}\right).\end{equation}

Its generic rank is again $4$, meaning that again the corresponding $\PP^3$ is completely contained in $\check{\GG}(1,5)$,  and drops to $0$ exactly on the point $P$. Hence we can still consider the smooth quadric $Q:\{ad-bc=0\}$. This time though, while the associated bundle is still case {(DEC2)} $E=\OO_Q(1)\oplus \OO_Q(1)$, the induced threefold $Y$ is the smooth cubic $\PP^1\times\PP^2$ and thus the morphism $\varphi_E: Q \to \GG(1,5)$ has degree 2, and is therefore not an embedding. 

\smallskip

As we underlined in Remark \ref{rem_3fold}, this situation never appears when dealing with linear spaces of dimension two, where $\varphi_E$ is always an embedding $\PP^2 \hookrightarrow \GG(1,5)$.
\end{ex}

\section{Construction techniques, part 2: projection}

An efficient method to construct spaces of matrices of constant rank consists in building bigger size matrices of a given rank, and then projecting them to smaller size matrices of the same rank. This technique was introduced in \cite{Fania_Mezzetti} for the case of $\PP^2$, and later used in \cite{bo_me_piani}, but the results hold in more generality. Indeed, they were already extended to linear spaces of matrices of any size in
 \cite[Proposition 5.1]{adp1}; here, we wish to apply these results to the case of quadrics.
In terms of bundles, this method amounts to expressing the desired rank $2$ bundle $E$ as quotient of a bundle of higher rank having the same Chern polynomial. 

\smallskip

Let us denote by $\sigma_r(X)$ the $r$-th secant variety of a projective variety $X$, that is, the closure of the union of $(r-1)$-planes generated by $r$ independent points of $X$. Now, assume that we have a surface $S$ contained in the stratum $\sigma_r(\GG(1,n)) \setminus \sigma_{r-1}(\GG(1,n))$, i.e. $S$ is a surface of skew-symmetric matrices of size $n+1$ and constant rank $2r$. If we project $\PP^n=\PP(V_{n+1})$ to $\PP^{n-1}=\PP(V_n)$ from a point $O$,  this projection induces another projection $\pi_O$ from $\PP(\Lambda^2 V_{n+1})$ to $\PP(\Lambda^2V_n)$, whose centre is the subspace $\Lambda_O \subseteq \GG(1,n)$ representing all lines through the point $O$.

A point $\omega$ in the stratum $\sigma_r(\GG(1,n)) \setminus \sigma_{r-1}(\GG(1,n))$ can be written in the form $[v_1 \wedge w_1 + \cdots + v_r \wedge w_r]$, where the $v_i$s and $w_i$s are $2r$ linearly independent vectors; the corresponding points generate a subspace $L_\omega$ of $\PP^n$ of dimension $2r-1$. The entry locus of $\omega$ is the sub-Grassmannian $\GG(1, L_\omega)$, 
namely a point in 
$\PP(\wedge^2V_{n+1})$ belongs to some $(r -1)$-plane, which is $r$-secant to $\GG(1, n)$ and contains $\omega$, if and only if it belongs to $\GG(1, L_\omega)$.

\begin{prop}\label{prop:Generalization} 
Let $S\subset\sigma_r(\GG(1,n)) \setminus \sigma_{r-1}(\GG(1,n))$ be a surface of skew-symmetric matrices of size $n+1$ and constant rank $2r$, and let $O \in \PP^n$ be a point such that $S \cap \Lambda_O = \emptyset$. Then the matrices of $\pi_O(S)$ have constant rank $2r$ if and only if the point $O$ does not belong to the union of the spaces $L_\omega$, as $\omega$ varies in $S$. As a consequence, $S$ can be projected to $\sigma_r(\GG(1, 2r +1))$ so that its rank remains constant and equal to $2r$.
\end{prop}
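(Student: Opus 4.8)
The plan is to analyze the projection $\pi_O$ rank-wise, pointwise on $S$. The key observation is that a skew-symmetric tensor $\omega$ has rank $2r$ precisely when it can be written as $v_1\wedge w_1+\cdots+v_r\wedge w_r$ with the $2r$ vectors $v_i,w_i$ linearly independent, and the span of these vectors is exactly $L_\omega$. The induced projection $\pi_O$ on $\PP(\wedge^2 V_{n+1})$ is, up to identification, induced by the linear projection $q\colon V_{n+1}\to V_n$ with kernel the line $\langle O\rangle$; concretely, $\pi_O(\omega)=q(v_1)\wedge q(w_1)+\cdots+q(v_r)\wedge q(w_r)$ inside $\wedge^2 V_n$. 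So the rank of $\pi_O(\omega)$ drops below $2r$ exactly when $q(v_1),q(w_1),\ldots,q(v_r),q(w_r)$ become linearly dependent in $V_n$.

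**First I would** make precise what $S\cap\Lambda_O=\emptyset$ buys us: $\Lambda_O$ is the sub-Grassmannian of lines through $O$, i.e. the locus of decomposable tensors $O\wedge u$; by hypothesis no $\omega\in S$ is of this form, which in particular (combined with constant rank $2r\geq 4$) guarantees $\pi_O$ is defined at every point of $S$, so $\pi_O(S)$ is genuinely a surface of size-$(2r)$... size $n$ skew matrices. Then the core equivalence: since $\{v_1,w_1,\ldots,v_r,w_r\}$ is a basis of $L_\omega$ and $q$ restricted to $L_\omega$ has kernel $L_\omega\cap\langle O\rangle$, the images $q(v_i),q(w_i)$ span a space of dimension $2r-\dim(L_\omega\cap\langle O\rangle)$, which equals $2r$ if and only if $O\notin L_\omega$, and equals $2r-1$ if $O\in L_\omega$. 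In the latter case $\pi_O(\omega)$ has rank at most $2r-2 < 2r$. Conversely if $O\notin L_\omega$ the $2r$ images stay independent, so $\pi_O(\omega)$ has rank exactly $2r$. This gives: $\pi_O(S)$ has constant rank $2r$ $\iff$ $O\notin\bigcup_{\omega\in S}L_\omega$.

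**The one point needing care** is the converse direction of the pointwise statement, namely that when $O\notin L_\omega$ the rank of $\pi_O(\omega)$ does not drop for some other reason — but this is immediate from the explicit formula $\pi_O(\omega)=\sum q(v_i)\wedge q(w_i)$ once the $2r$ vectors $q(v_i),q(w_i)$ are known to be linearly independent, since independence of the $2r$ vectors is exactly the rank-$2r$ condition. I would also note that the rank of $\pi_O(S)$ cannot exceed $2r$ anywhere, because projection cannot increase the rank of a skew-symmetric tensor (it is the image under a linear map), so ``constant rank $2r$'' really only requires ruling out the drop. The final sentence of the statement then follows by iterating: start from $S\subset\sigma_r(\GG(1,n))\setminus\sigma_{r-1}(\GG(1,n))$ and, as long as $n>2r+1$, the union $\bigcup_{\omega\in S}L_\omega$ is a union of a two-parameter family of $(2r-1)$-planes, hence has dimension at most $2r+1<n$, so a general point $O\in\PP^n$ lies outside it (and, being general, also off $\Lambda_O$-related locus); projecting from such $O$ drops the ambient dimension by one while preserving constant rank $2r$, and repeating brings us down to $\PP^{2r+1}$, i.e. to $\sigma_r(\GG(1,2r+1))$.

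**The main obstacle** I anticipate is purely bookkeeping: keeping the two conditions ``$S\cap\Lambda_O=\emptyset$'' (so that $\pi_O$ is defined on $S$, equivalently the projected matrices still have size-appropriate nonzero entries and the map is a morphism) and ``$O\notin\bigcup_\omega L_\omega$'' (so that the rank does not drop) clearly separated, and verifying in the iteration step that both hold for general $O$ — the dimension count $\dim\bigcup_\omega L_\omega\le 2+(2r-1)=2r+1$ needs $n\ge 2r+2$ to produce room, and one should observe that $\Lambda_O$ varies with $O$ so the first condition is an open condition on $O$ that is nonempty precisely because $S$ is not swept out by lines through a fixed point, which again follows from $S\not\subset$ any $\Lambda_O$. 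No deep input is required beyond the rank normal form for skew tensors and the functoriality of $\omega\mapsto\pi_O(\omega)$ already recalled in the paragraph preceding the proposition.
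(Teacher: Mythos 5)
Your proposal is correct and follows essentially the same route as the paper: write $\omega$ in rank normal form, observe that $\pi_O(\omega)$ is the sum of the wedges of the projected vectors so that the rank drops exactly when $O\in L_\omega$, and conclude the last claim by the dimension count $\dim\bigcup_{\omega\in S}L_\omega\le \dim S+2r-1=2r+1$ followed by iterated projection. Your justification of the pointwise equivalence via the kernel of the projection restricted to $L_\omega$ (independence of the $2r$ images versus a one-dimensional drop) is just a slightly more explicit phrasing of the paper's ``some summand $Av_i\wedge Aw_i$ vanishes'' step, not a different argument.
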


\begin{proof} The proofs of {\cite[Proposition 5.8 and Corollary 5.9]{Fania_Mezzetti}} 
go through step by step; we report them for the reader's convenience. If 
$\omega=[v_1 \wedge w_1 + \cdots + v_r \wedge w_r]$ is a point of $S$, then $\pi_O(\omega)=[Av_1 \wedge Aw_1 + \cdots + Av_r \wedge Aw_r]$, where $A$ is a matrix representing the projection $\pi_O$. Its rank is strictly less than $r$ if and only if the vectors $v_i$s and $w_i$s can be chosen so that some summand $Av_i \wedge Aw_i$ vanishes: this means precisely that $O \in L_\omega$. The last statement follows from the fact that $\dim \bigcup_{\omega \in S} L_\omega \le \dim S + 2r -1 =2r+1$.
\end{proof}

As mentioned above, from the point of view of vector bundles, projecting to a smaller size matrix means that the associated bundle $E$ appearing in sequence \eqref{succ_quadrica} is a quotient of a higher rank vector bundle $F$, in the sense that they fit into a short exact sequence of type
\begin{equation}\label{quoziente}
0 \to \OO_Q^{\rk F-2} \to F \to E \to 0.
\end{equation}

\smallskip

A logical way to construct bigger matrices (or higher rank bundles, if one prefers) is using \lq\lq building blocks'', that is, taking the vector bundle $F$ in \eqref{quoziente} to be a direct sum of two globally generated bundles with first Chern class $(1,1)$. In order to apply this method, we need to recall the classification of such bundles on $Q$ of any rank.

\begin{prop}\cite{Ballico_Huh_Malaspina}\label{(1,1)} Let $F$ be a vector bundle on a smooth quadric surface $Q$, with $c_1(F)=(1,1)$. Let $r$ be the rank of $F$, and suppose that $F$ has no trivial summands. Then $F$ is one of the following:
\begin{enumerate}
\item[(i)] $\OO_Q(1)$, $r=1$;
\item[(ii)] $\OO_Q(1,0)\oplus\OO_Q(0,1)$, $r=2$, $c_2=1$;
\item[(iii)] $\T\PP^3(-1)|_Q$, $r=3$, $c_2=2$;
\item[(iv)] $\cA_P=\pi_P^*(\T\PP^2(-1))$, where $\pi_P: Q\to\PP^2$ is the projection of centre $P\notin Q$, $r=2$, $c_2=2$.
\end{enumerate}
\end{prop}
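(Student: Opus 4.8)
This is \cite{Ballico_Huh_Malaspina}, so I only recall the strategy one would follow. The starting point is a rigidity observation: since $F$ is globally generated with $c_1(F)=(1,1)$, on every line $\ell$ of either ruling $F|_\ell$ is globally generated of degree $\ell\cdot c_1(F)=1$ on $\PP^1$, hence $F|_\ell\cong\OO_{\PP^1}(1)\oplus\OO_{\PP^1}^{r-1}$. This pins down the numerical behaviour of $F$ (for instance it forces $\HH^0(F(-2,0))=\HH^0(F(0,-2))=0$) and is what keeps the list finite. The rank $1$ case is then immediate, since a globally generated line bundle with $c_1=(1,1)$ is $\OO_Q(1)$.

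For $r=2$ the plan is to use the Serre construction. A general section of $F$ vanishes on a zero-dimensional scheme $Z$ with $\ell(Z)=c_2(F)\ge 0$, giving
\[
0\to\OO_Q\to F\to\II_Z(1,1)\to 0,
\]
and global generation of $F$ forces $\II_Z(1,1)$ to be globally generated. Since $\OO_Q(1,1)$ is the hyperplane bundle of $Q\subset\PP^3$, a line of $Q$ through two points of $Z$ would lie in the base locus of $|\HH^0(\II_Z(1,1))|$; this forces $\ell(Z)\le 2$ and, when $\ell(Z)=2$, that $Z$ is not contained in a ruling. I would then split by $c_2$: if $c_2=0$ the sequence splits and $F=\OO_Q\oplus\OO_Q(1,1)$, excluded by hypothesis; if $c_2=1$ then $\Ext^1(\II_p(1,1),\OO_Q)\cong\C$, so $F$ is the unique nonsplit extension, and since $\OO_Q(1,0)\oplus\OO_Q(0,1)$ is such an extension (its general section vanishes at one point), $F\cong\OO_Q(1,0)\oplus\OO_Q(0,1)$, case (ii); if $c_2=2$, a short cohomology computation gives $\hh^0(F)=3$, whence from the evaluation sequence a resolution $0\to\OO_Q(-1,-1)\to\OO_Q^3\to F\to 0$ whose three entries are three independent linear forms on $\PP^3$ with common zero a point $P$, and local freeness of the cokernel forces $P\notin Q$, so $F\cong\pi_P^*(\T\PP^2(-1))=\cA_P$, case (iv).

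For $r\ge 3$ I would peel off sections: $r-2$ general global sections of $F$ span a subbundle $\OO_Q^{r-2}\subset F$ with locally free rank $2$ quotient $\bar F$, since their degeneracy locus has codimension $\ge 3>\dim Q$. Then $\bar F$ is globally generated with $c_1(\bar F)=(1,1)$ and $c_2(\bar F)=c_2(F)$, so by the rank $2$ step $\bar F$ is one of $\OO_Q\oplus\OO_Q(1,1)$, $\OO_Q(1,0)\oplus\OO_Q(0,1)$, $\cA_P$. In the first two cases $\Ext^1(\bar F,\OO_Q)=\HH^1(\bar F^\vee)=0$, so $0\to\OO_Q^{r-2}\to F\to\bar F\to 0$ splits and $F$ has a trivial summand, a contradiction. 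In the remaining case, using $\Ext^1(\cA_P,\OO_Q)\cong\C$ --- together with the fact that the unique nonsplit extension of $\cA_P$ by $\OO_Q$ is $\T\PP^3(-1)|_Q$, which one identifies through the relative tangent sequence of $\pi_P:\PP^3\dashrightarrow\PP^2$ and the Euler resolution $0\to\OO_Q(-1,-1)\to\OO_Q^4\to\T\PP^3(-1)|_Q\to 0$ --- I would reduce to $0\to\OO_Q^{r-3}\to F\to\T\PP^3(-1)|_Q\to 0$; since $\Ext^1(\T\PP^3(-1)|_Q,\OO_Q)=0$ this splits, so $F=\OO_Q^{r-3}\oplus\T\PP^3(-1)|_Q$, and the absence of trivial summands forces $r=3$ and $F=\T\PP^3(-1)|_Q$, case (iii).

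The genuinely delicate step is the rank $2$, $c_2=2$ case: upgrading ``$F$ is an extension of $\II_Z(1,1)$ by $\OO_Q$ with $Z$ a length-$2$ scheme off the rulings'' to ``$F\cong\cA_P$'' requires keeping track of which extension classes in $\Ext^1(\II_Z(1,1),\OO_Q)$ produce a sheaf that is simultaneously locally free and globally generated --- a Cayley--Bacharach type bookkeeping --- and then matching the outcome against the $\mathrm{Aut}(Q)$-homogeneous family $\{\cA_P:P\notin Q\}$. Everything else reduces to routine cohomology computations on $\PP^1\times\PP^1$.
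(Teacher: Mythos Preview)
The paper does not supply its own proof of this proposition: it is stated as a quotation from \cite{Ballico_Huh_Malaspina} and used as a black box (the only related remark in the text is that ``the rank $3$ bundle of case {\em(iii)} is the only non-trivial extension of $\cA$ by $\OO_Q$''). So there is nothing to compare against; I can only comment on your sketch.

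Your outline is sound and matches the standard route one would take. A couple of small points worth tightening. First, the hypothesis ``globally generated'' is missing from the statement as printed in the paper (an evident omission, since without it the list is false); you rightly insert it, and you should flag that you are doing so. Second, your justification of $\ell(Z)\le 2$ in rank $2$ is a shade too quick: ruling out two points on a line of the ruling does not by itself cap the length. The clean way to finish is to note that any three collinear points of $Q\subset\PP^3$ must lie on a ruling line, so if $\ell(Z)\ge 3$ with no two points on a ruling line then $Z$ imposes independent conditions on $|\OO_Q(1,1)|$, giving $\hh^0(\II_Z(1,1))\le 1$ and hence $\hh^0(F)\le 2$; but a globally generated rank $2$ bundle with $c_1=(1,1)$ cannot have $\hh^0=2$ (else $\OO_Q^2\twoheadrightarrow F$ would be an isomorphism), a contradiction. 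Third, in the $c_2=2$ case you already give the decisive argument in the body of the sketch --- the resolution $0\to\OO_Q(-1)\to\OO_Q^3\to F\to 0$ with three independent linear forms meeting at $P\notin Q$ --- so your closing paragraph overstates the difficulty; no Cayley--Bacharach bookkeeping beyond what you wrote is needed. The rank $\ge 3$ reduction via $\Ext^1(\bar F,\OO_Q)$ is correct, and your identification of the unique nonsplit extension of $\cA_P$ by $\OO_Q$ with $\T\PP^3(-1)|_Q$ is exactly the fact the paper records after the proposition.
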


We point out that the rank 2 bundle $\cA_P$ from case {\em(iv)} had already  appeared in Example \ref{DEC 2}. The rank $3$ bundle of case {\em(iii)} is the only non-trivial extension of $\cA$ by $\OO_Q$, as shown in \cite[Proposition 5.4]{Ballico_Huh_Malaspina}. 

\medskip

We now want to study the \lq\lq building blocks'' arising from each of the cases above.
\medskip

The discussion in Example \ref{DEC 2} shows that, given a point $P$ outside the quadric $Q$, $\OO_Q(1)$ fits in a short exact sequence of the form 
\[0 \to \mathcal{A}_P^* \to  \OO_Q^3 \to \OO_Q(1) \to 0,\]
which is just \eqref{ris AP} dualised. Therefore the building block corresponding to case {\em (i)}
is a $3\times 3$ matrix of the form 
\begin{equation}\label{blocco_O(1)_2}
\begin{pmatrix}
\cdot&\ell_1&\ell_2\\
-\ell_1&\cdot&\ell_3\\
-\ell_2&-\ell_3&\cdot
\end{pmatrix},
\end{equation} where $\ell_1, \ell_2, \ell_3$ are linear forms in four variables such that the equations $\ell_1=\ell_2=\ell_3=0$ define the point $P$.

To see what kind of building block corresponds to case {\em(ii)}, we remark  that the decomposable bundle  $\OO_Q(1,0) \oplus \OO_Q(0,1)$  gives rise to an exact sequence of the form
\[0 \to \OO_Q(-2,-1)\oplus\OO_Q(-1,-2) \to \OO_Q(-1)^4 \to \OO_Q^4 \to \OO_Q(1,0)\oplus\OO_Q(0,1)\to 
0.\]
A corresponding building block is, for instance, the
$4\times 4$ skew-symmetric matrix: 
\begin{equation}\label{blocco_(ii)}
\left(\begin{array}{cccc}
\cdot&\cdot&a&b\\
\cdot&\cdot&c&d\\
-a&-c&\cdot&\cdot\\
-b&-d&\cdot&\cdot
\end{array}\right).
\end{equation}
It can be interpreted as a quadric surface contained in $\check{\GG}(1,3)$, and more precisely it is a linear section of $\check{\GG}(1,3)$ cut out by two hyperplanes. It represents a linear congruence of lines in $\PP^3$, formed by the lines meeting two fixed skew lines in $\PP^3$ (see for instance \cite{DePoi_congruences}).

The bundle appearing in case {\em (iii)} gives rise to 
the $5\times 5$ skew-symmetric block
\begin{equation}\label{blocco_(iii)}
\left(\begin{array}{c|cccc}
\cdot&a&b&c&d\\
\hline
-a&\cdot&\cdot&\cdot&\cdot\\
-b&\cdot&\cdot&\cdot&\cdot\\
-c&\cdot&\cdot&\cdot&\cdot\\
-d&\cdot&\cdot&\cdot&\cdot
\end{array}\right).
\end{equation}
The map represented by matrix  \eqref{blocco_(iii)} is obtained by composing the Euler sequence on $\PP^3$ restricted to $Q$:
\[0 \to \OO_Q(-1) \to \OO_Q^4 \to {\T\PP^3}(-1)|_Q \to 0,\]
with its dualised sequence, so that we get a long exact sequence of the form
\[\xymatrix@C-1.6ex@R-3ex{({\T\PP^3}(-1)|_Q)^*(-1) \ar@{^{(}->}[r]&\OO_Q(-1)^5\ar[rr] \ar[dr]&& \OO_Q^5 \ar@{->>}[r]& {\T\PP^3}(-1)|_Q.\\
&&\OO_Q \oplus \OO_Q(-1)\ar[ur]&&}\]

Finally, since the rank two bundle $\cA_P$ is a quotient of ${\T\PP^3}(-1)|_Q$, a corresponding building block can be obtained by projection from (\ref{blocco_(iii)}). For example, if we project from the point $P=[1:0:0:1]$, we can take
\begin{equation}\label{blocco_(iv)}
\left(\begin{array}{cccc}
\cdot&b&c&d-a\\
-b&\cdot&\cdot&-b\\
-c&\cdot&\cdot&-c\\
a-d&b&c&\cdot
\end{array}\right).
\end{equation}


\bigskip

Let us now give more details on how we apply the projection technique: first, we consider a direct sum of two of the bundles with $c_1=(1,1)$ appearing in Proposition \ref{(1,1)}, and the direct sum of two corresponding matrices. Then, we take a quotient of rank two of this bundle and the corresponding projection of the matrix. We compute the Chern class $c_2$ of the quotient, and we try to identify the rank two bundle so obtained. 

The possible values of $c_2$ that one can obtain are the following:
\begin{enumerate}
\item{$\OO_Q(1)\oplus\OO_Q(1)$}: $c_2=2$, the rank is $2$, there is no projection;
\item{$\OO_Q(1)\oplus\OO_Q(1,0)\oplus\OO_Q(0,1)$}: $c_2=3$, $E$ is of type (IND1);
\item{$\OO_Q(1)\oplus \T{\PP^3}(-1)|_Q$}: $c_2=4$;
\item{$\OO_Q(1,0)^{2}\oplus\OO_Q(0,1)^{2}$}: $c_2=4$;
\item{$\OO_Q(1,0)\oplus\OO_Q(0,1)\oplus\T{\PP^3}(-1)|_Q$}: $c_2=5$, $E$ is of type (IND4);
\item{$\T{\PP^3}(-1)|_Q^{2}$}: $c_2=6$, $E$ is of type (IND5).
\end{enumerate}

\medskip

In the three instances (2), (5), (6), corresponding to second Chern class $3,5,6$ respectively, there is only one possible 
globally generated bundle having these invariants, namely the ones appearing in cases (IND1), (IND4), (IND5) from Propositions \ref{decomp} and \ref{indecomp}. We start by giving explicit 
examples for all these three cases. 

%

\begin{ex}\label{IND1}
The quotient of type 
\[0 \to \OO_Q \to  \OO_Q(1)\oplus\OO_Q(1,0)\oplus\OO_Q(0,1)\to E \to 0 \] 
has $c_2(E)=3$, therefore the vector bundle $E$ corresponds to case {(IND1)} in Proposition \ref{indecomp}. A constant rank matrix obtained via the projection technique is:
\begin{equation}\label{c2=3}
\left(\begin{array}{cccccc}
\cdot& b+c& -a+d& -a+d& \cdot& -a+d\\
-b-c& \cdot& -b& -b& 0& -b\\
a-d& b&\cdot& \cdot& a& -b\\
a-d& b& \cdot& \cdot& -c& d\\
\cdot& \cdot& -a&c& \cdot& \cdot\\
a-d& b& b& -d& \cdot& \cdot
\end{array}\right).
\end{equation}

Its Pfaffian defines the cubic surface union of the plane $\Pi:\{b+c=0\}$ and the quadric surface $Q:\{ad-bc=0\}$. With the help of Macaulay2, we get that $Y$ is a threefold of degree $5$ as expected, and that its singular locus is the union of the line $x_2=x_3=x_4=x_5=0$ and the two points $[0:0:1:-1:0:0]$ and $[0:0:0:0:1:0]$.
\end{ex}

\begin{ex}\label{IND 4}
The quotient of type 
\[0 \to \OO_Q^3 \to  \OO_Q(1,0) \oplus \OO_Q(0,1)\oplus \T\PP^3(-1)|_Q \to E \to 0 \]
has $c_2(E)=5$, therefore the bundle $E$ corresponds to case {(IND4)} in Proposition \ref{indecomp}. 
A constant rank matrix obtained via the projection technique is:
\begin{equation}\label{c2=5}
\left(\begin{array}{cccccc}
\cdot& -b& b& \cdot& \cdot& -a\\
b& \cdot& \cdot& -d& \cdot& (b-c)\\
-b& \cdot& \cdot& a& c& -d\\
\cdot& d& -a& \cdot& \cdot& c\\
\cdot& \cdot& -c& \cdot& \cdot& \cdot\\
a& -(b-c)& d& -c& \cdot& \cdot
\end{array}\right).
\end{equation}

Its Pfaffian defines the cubic surface union of the plane $\Pi:\{c=0\}$ and the quadric surface $Q:\{ad-bc=0\}$. We find that $Y$ is a threefold of degree $3$ as expected, and it is singular 
at four points.
\end{ex}

\begin{ex}\label{IND 5}
A quotient of type
\begin{equation}\label{copie del tangente}
0 \to \OO_Q^4 \to (\T\PP^3(-1)|_Q)^2 \to E \to 0
\end{equation}
has $c_2(E)=6$, therefore the bundle $E$ corresponds to case {(IND5)} in Proposition \ref{indecomp}.  
A constant rank matrix obtained via the projection technique is the following:
\begin{equation}\label{c2=6}
\left(\begin{array}{cc|cccc}
\cdot& a& b& c& d& \cdot\\
-a& \cdot& a& b& c& d\\
\hline
-b& -a& \cdot& \cdot& \cdot& \cdot\\
-c& -b& \cdot&\cdot& \cdot& \cdot\\
-d& -c& \cdot& \cdot& \cdot& \cdot\\
\cdot& -d& \cdot& \cdot& \cdot& \cdot
\end{array}\right).
\end{equation}

Matrix \eqref{c2=6} has generic rank $4$, meaning that in this example the $\PP^3$ having coordinates $a,b,c,d$ is completely contained in $\check{\GG}(1,5)$: 
the rank drops to $2$ exactly on the point $P=[1:0:0:0]$ 
so we can work on the quadric $Q:\{ad-bc=0\}$.
The induced threefold is $Y=\bar{\varphi}_E(\PP(E))=\PP^3$, therefore this is another instance where $\varphi_E$ is not an embedding, and has precisely degree 2.

\begin{rem}
We note that in the $10\times 10$ matrix, direct sum of two blocks of type (\ref{blocco_(iii)}),  all the non-zero elements are contained in two rows and columns. This means that  the $\PP^3$ represented by this matrix is entirely contained in the tangent space to the Grassmannian $\GG(1,10)$ at a point $\ell$.  After projecting and restricting to the quadric, we see that $Q$ is contained in the tangent space to $\GG(1,5)$ at the point $\ell'$ projection of $\ell$. Therefore, when we apply the map $\varphi_E=\gamma|_Q$ to $Q$, the image is contained in $\GG(1,H)$, where $H$ is the $\PP^3$ dual of $\ell'$. Hence $\bar{\varphi}_E(\PP(E))$ is contained in a $\PP^3$. It follows that $\varphi_E$  has  degree $2$ for any choice of projection.
\end{rem}

\begin{rem}
An interesting observation is that the vector bundle $E$ from \eqref{copie del tangente}, quotient of a direct sum of copies of $\T\PP^3(-1)|_Q$, attains the maximal possible value of the second Chern class $c_2(E)$, from Proposition \ref{prop_chern}. 
This can be seen as a ``quadratic counterpart'' to \cite[Proposition 3.2]{bo_me_piani}: there, in the classification of dimension 2 linear spaces of matrices, an upper bound for the second Chern class was found. The bundles whose $c_2$ attained the maximal values were precisely the ones obtained on $\PP^2$ as quotients of a direct sum of copies of $\T\PP^2(-1)$.
\end{rem}
\end{ex}

\smallskip

So far we have used the projection technique to construct examples where the value of the second Chern class was associated with a unique vector bundle in Propositions \ref{decomp} and \ref{indecomp}. We now move on to the trickier case $c_2=4$: we will see that, depending on the choice of the centre of projection, we can obtain all three corresponding cases, namely {(DEC4)}, {(IND2)}, and {(IND3)}.

\begin{ex}\label{c_2=4}
Taking the $8 \times 8$ skew-symmetric matrix direct sum of two blocks of type \eqref{blocco_(ii)}, having constant rank $4$ on the quadric surface $Q:\{ac-bd=0\}$, we obtain a quotient of type
\begin{equation}\label{es_quoz_1}
0 \to \OO_Q^2 \to \OO_Q(1,0)^2 \oplus \OO_Q(0,1)^2 \to E \to 0.
\end{equation}

After computer tests with Macaulay2, we ended up with the following three cases to be considered.

\medskip

Projecting from the line $L$  
of equations $x_0-x_2=x_0+x_1-x_3=2x_2-x_3+x_4=x_3-x_4-x_5=2x_4+x_5-2x_6=x_5-2x_7=0$, we obtain the following $6 \times 6$ skew-symmetric matrix whose rank is constant and equal to 4 on $Q$:

 \begin{equation}\label{c2=4 con proiezione2}
\scriptsize {\left(\begin{array}{ccc|ccc}
\cdot& a-b+c-d&2a-b+2c-d& a+c& b+d& \cdot\\ 
-a+b-c+d& \cdot&2c-d& c& d& \cdot\\
-2a+b-2c+d& -2c+d& \cdot& -a&\cdot& -a+b\\
\hline
-a-c&-c& a& \cdot&-a-c& b\\
-b-d& -d& \cdot&a+c& \cdot& a-b+c-d\\
\cdot& \cdot& a-b& -b& -a+b-c+d& \cdot\\
\end{array}\right).}
\end{equation}

Its Pfaffian vanishes on the quadric $Q$ and on the plane $\Pi:\{a+b=0\}$. 
The threefold $Y$ from Remark \ref{rem_3fold} has degree $4$, which is consistent with the fact that the associated bundle $E$ in \eqref{es_quoz_1} has $c_2(E)=4$. Its singular locus consists of four points.

One can see that the restriction of $E$ to both rulings of the quadric is $\OO_{\PP^1}(1) \oplus \OO_{\PP^1}(1)$, hence $E$ is an indecomposable bundle corresponding to case {(IND2)} in Proposition \ref{indecomp}.

\medskip

 Projecting from the line 
of equations $x_0=x_1-x_6=x_2+x_7=x_3-x_6=x_4-x_6=x_5=0$
 we obtain the following $6 \times 6$ skew-symmetric matrix whose rank is constant and equal to 4 on $Q$:
\begin{equation}\label{c2=4 con proiezione3}
\left(\begin{array}{ccc|ccc}
\cdot& \cdot&a&b&\cdot& \cdot\\ 
\cdot& \cdot&c& d& a& c\\
-a& -c& \cdot&\cdot& -b& -d\\
\hline
-b&-d&\cdot& \cdot&a& c\\
\cdot& -a& b&-a& \cdot&c\\
\cdot&-c& d& -c& -c& \cdot\\
\end{array}\right).
\end{equation}

Its Pfaffian vanishes on the quadric $Q$ and on the plane $\{b-c=0\}$.  
The threefold $Y$ has degree $4$, as expected. Its singular locus is the union of a conic and two points.
The restriction of $E$ to one of the rulings of the quadric is $\OO_{\PP^1}(1) \oplus \OO_{\PP^1}(1)$ and to the other ruling is $\OO_{\PP^1}\oplus \OO_{\PP^1}(2)$, therefore we are dealing with case {(IND3)} from Proposition \ref{indecomp}.

\medskip

Finally, projecting from the line of equation $x_0=x_2=x_3+x_7=x_4+x_1=x_5=x_6=0$ we obtain the following $6 \times 6$ skew-symmetric matrix whose rank is constant and equal to 4 on $Q$:
\begin{equation}\label{O(2,0)+O(0,2)}
\left(\begin{array}{ccc|ccc}
\cdot& a& b& \cdot& \cdot& \cdot\\ 
-a& \cdot& \cdot& -(b+c)& -d& \cdot\\
-b& \cdot& \cdot& -d&\cdot& \cdot\\
\hline
\cdot& b+c& d& \cdot& \cdot& a\\
\cdot& d& \cdot& \cdot& \cdot& c\\
\cdot& \cdot& \cdot& -a& -c& \cdot
\end{array}\right).
\end{equation}

Its Pfaffian vanishes on the quadric $Q$ and on the plane $\{b+c=0\}$. This time again the threefold $Y$  turns out to have degree $4$ as expected. The singular locus of $Y$ consists of two disjoint conics.

Since the restriction of $E$ to both rulings of the quadric is $\OO_{\PP^1} \oplus \OO_{\PP^1}(2)$, this  means that the matrix \eqref{O(2,0)+O(0,2)}'s cokernel is the decomposable bundle $\OO_Q(2,0) \oplus \OO_Q(0,2)$, and that we have constructed an example of case {(DEC4)} from Proposition \ref{decomp}.
\end{ex}

The difference among the three cases in Example \ref{c_2=4} can be explained looking at the position of the line $L\subset \check{\PP^7}$, centre of projection, with respect to four $5$-spaces we now introduce. The vector bundle $F:=\OO_Q(1,0)^2 \oplus \OO_Q(0,1)^2$ defines a map $\psi: Q\to\GG(3,7)$ that can be interpreted as follows. Each direct summand defines a map $\pi_i: Q\to \PP^1$; we fix $4$ general lines $\ell_i$, $i=1\ldots, 4$, in $\PP^7$ and identify them with the codomains of the maps $\pi_i$. Then $\psi$ sends a point $P\in Q$ to the $\PP^3$ generated by the images $\pi_i(P)$.  The duals of the lines $\ell_i$ are the $5$-spaces under consideration. The general case {(IND2)} is obtained when $L$ is disjoint from all the $5$-spaces, in the second case {(IND3)} $L$ meets two of the $5$-spaces, and in the decomposable case {(DEC4)} $L$ meets all of them.

\section{Construction techniques, part 3: extend \& restrict}\label{extres}

To conclude the proof of Theorem \ref{main theorem} there is only one case left, namely the decomposable bundle $\OO_Q(2,1) \oplus \OO_Q(0,1)$, case {(DEC3)} from Proposition \ref{decomp}. It cannot be constructed with the techniques from the previous sections, and we need a different approach.

Recall that we are looking at the smooth quadric surface $Q$ as a quadratic system of skew-symmetric matrices of constant rank 4, of projective dimension 2. The spanned $\PP^3=<Q>$ cannot be entirely contained in $\check{\GG}(1,5) \setminus \GG(1,5)$, therefore two possibilities can occur. The first one is that $\PP^3 \subset \check{\GG}(1,5)$ and $\PP^3 \cap \GG(1,5) \neq \emptyset$: then the general plane $\PP^2 \subset \PP^3$ will be a plane of constant rank matrices, and thus equivalent to one of the four types described in \cite{Manivel_Mezzetti}. The other instance that can arise is that $\PP^3 \nsubseteq \check{\GG}(1,5)$: then the intersection $\PP^3\cap \check{\GG}(1,5)$ will be a cubic surface, union of a quadric $Q$ and a plane of constant rank matrices, again equivalent to one of the types in \cite{Manivel_Mezzetti}.

Thus, if one considers a plane in one of the 4 orbits of \cite{Manivel_Mezzetti}, extends  the associated $6 \times 6$ skew-symmetric matrix to a $\PP^3$, and then restricts it to a quadric surface $Q \subset \PP^3$ that does not intersect the Grassmannian $\GG(1,5)$, one obtains exactly a quadratic system of skew-symmetric matrices of constant rank 4. This should clarify why we call this technique ``extend \& restrict''.

\begin{ex}\label{DEC 3}
We extend a plane of type $\Pi_t$ from \cite[Example 3]{Manivel_Mezzetti} to a $\PP^3$, and then intersect this $\PP^3$ with the Pfaffian hypersurface: the intersection is a cubic surface, union of $\Pi_t$ and a smooth quadric. The corresponding vector bundle on the plane is a Steiner bundle $\mathcal{E}$ on $\PP^2$ fitting in a short exact sequence of type 
\[0 \to \OO_{\PP^2}(-1)^2 \to \OO_{\PP^2}^4 \to \mathcal{E} \to 0.\]

Implementing this idea with the help of Macaulay2, we obtain the following example:
\begin{equation}\label{O(2,1)+O(0,1)}
\left(\begin{array}{cc|cccc} 
\cdot&\cdot & b& c& d& {a}\\
\cdot&\cdot & {a}& b& c& d\\
\hline
-b& {-a}& \cdot& \cdot& {a}& {-a}\\
-c& -b& \cdot& \cdot& \cdot& \cdot\\
-d& -c& {-a}& \cdot& \cdot& \cdot\\
{-a}& -d& {a}& \cdot& \cdot& \cdot
\end{array}\right),
\end{equation}
whose Pfaffian vanishes, as expected, on the cubic surface in $\PP^3$ union of the plane $\Pi:\{a=0\}$ and the quadric $Q:\{ab-c^2+bd-cd=0\}$. 

The resulting threefold $Y$ from Remark \ref{rem_3fold} had degree 6; therefore from equation \eqref{relazione chern} we learn that $\deg (\bar{\varphi}_E)=1$ and $c_2(E)=2$, and hence $E$ splits as the direct sum of two line bundles. More in detail, $Y$ is the union of cones having vertices on a given line; its singular locus is the union of the line itself together with a twisted cubic.

Furthermore, the splitting type of $E$ on the two rulings of the quadric is $\OO_{\PP^1} \oplus \OO_{\PP^1}(2)$ on the first ruling and $\OO_{\PP^1}(1) \oplus \OO_{\PP^1}(1)$ on the second: we conclude that $E$ is the vector bundle $\OO_Q(2,1) \oplus \OO_Q(0,1)$ (or its symmetric equivalent $\OO_Q(1,2) \oplus \OO_Q(1,0)$), that is, we have constructed an example corresponding to case (DEC3). 
\end{ex}

The proof of the main Theorem \ref{main theorem} is now completed.

\begin{rem} 

Concerning this new method a natural question arises: 
since there are exactly four different orbits (up to the action of 
$PGL_6$) of $\PP^2 \subset \check{\GG}(1,5) \setminus \GG(1,5)$ 
described in \cite{Manivel_Mezzetti}, and since we have just seen in 
Example \ref{DEC 3}  that the plane of type $\Pi_t$ does  ``extend \& 
restrict'', one would like to know if this holds true for all the 
planes in the four different orbits.   A positive answer to this last 
question concludes  our paper.

\medskip
Of course, a plane of type $\Pi_5$ from \cite[Example 1]{Manivel_Mezzetti}, that is, a plane contained in 
$\PP(\wedge^2 V_5) \subset \PP(\wedge^2 V_6)$, associated to the split bundle $\OO_{\PP^2} \oplus \OO_{\PP^2}(2)$, will extend \& restrict to the decomposable bundle $\OO_Q \oplus \OO_Q(2)$, case {(DEC1)} from Proposition \ref{decomp}. Matrix \eqref{O+O(2)} is an explicit example.

\medskip

A plane of type $\Pi_p$ from \cite[Example 4]{Manivel_Mezzetti}, corresponding to the Null Correlation bundle on $\PP^3$ restricted to a hyperplane, extends \& restrict to an indecomposable bundle of type {(IND1)} in Proposition \ref{indecomp}. An explicit example is the matrix:
\begin{equation}\label{fibrato c2=3}
\left(\begin{array}{ccc|ccc} 
\cdot& \cdot& {d}& a& b& c\\
\cdot& \cdot& a& c& {d}& \cdot\\
{-d}& -a& \cdot& b& \cdot& {d}\\
\hline
-a& -c& -b& \cdot& \cdot& \cdot\\
-b& {-d}& \cdot& \cdot& \cdot& \cdot\\
-c& \cdot& {-d}& \cdot& \cdot& \cdot
\end{array}\right).
\end{equation}

\medskip

Finally, a plane of type $\Pi_g$ from \cite[Example 2]{Manivel_Mezzetti}, whose corresponding vector bundle is the decomposable bundle $\OO_{\PP^2}(1)\oplus \OO_{\PP^2}(1)$, extends \& restrict to the decomposable bundle 
$\OO_Q(2,0)\oplus\OO_Q(0,2)$, case {(DEC4)} from Proposition \ref{decomp}. An explicit example is the matrix:
\begin{equation}\label{O(2,0)+O(0,2) bis}
\left(\begin{array}{ccc|ccc} 
\cdot& a& -b& {d}&\cdot &\cdot \\
-a& \cdot& c&\cdot & {-d}& \cdot\\
b& -c& \cdot&\cdot &\cdot & {-d}\\
\hline
{-d}& \cdot&\cdot & \cdot& a& b\\
\cdot& {d}&\cdot & -a& \cdot& c\\
\cdot& \cdot& {d}& -b& -c& \cdot
\end{array}\right).
\end{equation}
%
\end{rem}

\bibliographystyle{amsalpha}
\bibliography{biblio_quadriche}

\end{document}